\numberwithin{equation}{section}
\theoremstyle{plain}
\newtheorem{theorem}[equation]{Theorem}
\newtheorem{lemma}[equation]{Lemma}
\theoremstyle{definition}
\theoremstyle{remark}
\newcommand{\dv}{\operatorname{div}}
\begin{document}

\title{Dahlberg's bilinear estimate for solutions of divergence form 
complex elliptic equations}

\author[S. Hofmann]{Steve Hofmann}
\address{Department of Mathematics, 
University of Missouri, Columbia, Missouri 65211, USA}
\email{hofmann@math.missouri.edu}
\thanks{The author was supported by the National Science Foundation}

\subjclass{42B20, 42B25, 35J25}

\maketitle
\begin{abstract} We consider divergence form elliptic operators $L=-\dv A(x)\nabla$, defined in
$\mathbb{R}^{n+1}=\{(x,t)\in\mathbb{R}^{n}\times\mathbb{R}\},\, n \geq 2$, where the $L^{\infty}$ coefficient matrix $A$ is
$(n+1)\times(n+1)$, uniformly elliptic, complex and $t$-independent. Using recently obtained results
concerning the boundedness and invertibility of layer potentials associated to such operators, we show that if $Lu=0$ in $\mathbb{R}^{n+1}_+$, then for any vector-valued ${\bf v} \in W^{1,2}_{loc},$ we have the bilinear estimate
$$\left|\iint_{\mathbb{R}^{n+1}_+} \nabla u \cdot \overline{{\bf v}} dx dt \right|\leq C\sup_{t>0}
\|u(\cdot,t)\|_{L^2(\mathbb{R}^n)}\left( \||t \nabla {\bf v}\|| +
\|N_*{\bf v}\|_{L^2(\mathbb{R}^n)}\right),$$
where $\||F\|| \equiv \left(\iint_{\mathbb{R}^{n+1}_+} |F(x,t)|^2 t^{-1} dx dt\right)^{1/2},$ and where $N_*$ is the usual non-tangential maximal operator.  The result is new even in the case
of real symmetric coefficients, and generalizes the analogous result of Dahlberg for harmonic functions on Lipschitz graph domains.
\end{abstract}

\section{Introduction \label{s1}}

In \cite{D}, B. Dahlberg considered the bilinear singular integral form
\begin{equation} \int_\Omega \nabla u \cdot\overline{{\bf v}},\end{equation} where 
$u$ is harmonic in the domain
$\Omega \equiv \{(x,t) \in \mathbb{R}^{n+1}: t > \varphi (x)\},$ with $\varphi $ Lipschitz, and
where ${\bf v} \in W^{1,2}_{loc}$ is vector valued.  He showed that the bilinear form (1.1) is bounded by
the $L^2$ norm of the square function plus the non-tangential maximal function of $u$, times the same expression 
for ${\bf v}$.  In the present note, we generalize Dahlberg's Theorem to variable coefficient divergence form elliptic operators.  To be precise, let 
\begin{equation*} L=-\dv A\nabla\equiv-\sum_{i,j=1}^{n+1}\frac{\partial}{\partial
x_{i}}\left(A_{i,j} \,\frac{\partial}{\partial x_{j}}\right)\end{equation*}
be defined in $\mathbb{R}^{n+1}=\{(x,t)\in\mathbb{R}^{n}\times\mathbb{R}\},\, n\geq 2,$ (we use the convention that
$x_{n+1}=t$), where $A=A(x)$ is an $(n+1)\times(n+1)$ matrix of complex-valued $L^{\infty}$ coefficients, defined on
$\mathbb{R}^{n}$ (i.e., independent of the $t$ variable), and satisfying the 
uniform ellipticity (accretivity) condition
\begin{equation}
\label{eq1.1} \lambda|\xi|^{2}\leq\Re e\,\langle A(x)\xi,\xi\rangle, \, \,\,
  \Vert A\Vert_{L^{\infty}(\mathbb{R}^{n})}\leq\Lambda,
\end{equation}
 for some $\lambda>0$, $\Lambda<\infty$, and for all $\xi\in\mathbb{C}^{n+1}$, $x\in\mathbb{R}^{n}$. Here,
$\langle\cdot,\cdot\rangle$ denotes the usual hermitian inner product in $\mathbb{C}^{n+1}$, so that \begin{equation*}
\langle A(x)\xi,\xi\rangle\equiv\sum_{i,j=1}^{n+1}A_{ij}(x)\xi_{j}\bar{\xi_{i}}\end{equation*}

In order to state our theorem,
we first recall that the non-tangential maximal operator $N_{\ast}$ (and a variant
$\widetilde{N}_*$)
are defined as follows. Given $x_0\in\mathbb{R}^{n}$, let
$$\gamma(x_0)=\{(x,t)\in\mathbb{R}_{+}^{n+1}:|x_0-x|<t\}$$
denote the cone with vertex at $x_0$. Then
for $U $ defined in $\mathbb{R}_{+}^{n+1}$,
\begin{equation*} N_{\ast} U(x_0)  \equiv\sup_{(x,t)\in\gamma(x_0)}|U(x,t)|,\quad
\widetilde{N}_{\ast} U(x_0)  \equiv\sup_{(x,t)\in \gamma(x_0)}
\left(\fint\!\!\fint_{\substack{|x-y|<t\\ |t-s|<t/2}}|U(y,s)|^{2}dyds\right)^{\frac{1}{2}}.\end{equation*}

Our main result is the following:
\begin{theorem}  \label{t1.3} Suppose that $L$ is an operator of the type described above, with 
\begin{equation}\label{eq1.small}\|A - A_0\|_\infty \leq \epsilon ,\end{equation} for some real, symmetric, $L^\infty$, elliptic, and $t$-independent matrix  $A_0$.  Suppose also that
$Lu=0$, and that ${\bf v} \in W^{1,2}_{loc}(\mathbb{R}^{n+1}, \mathbb{C}^{n+1}).$  
If $\epsilon\leq\epsilon_0$, with $\epsilon_0$ sufficiently small, depending only on dimension and ellipticity, 
then we have the bilinear estimate
$$\left|\iint_{\mathbb{R}^{n+1}_+} \nabla u \cdot \overline{{\bf v}} \, dx dt \,\right| \leq C
\sup_{t>0}
\|u(\cdot,t)\|_{L^2(\mathbb{R}^n)}\left( \||t \nabla {\bf v}\|| +
\|N_*{\bf v}\|_{L^2(\mathbb{R}^n)}\right),$$
where $ C = C(n,\lambda,\Lambda)$ and $$\||F\|| \equiv \left(\iint_{\mathbb{R}^{n+1}_+} |F(x,t)|^2 t^{-1} dx dt\right)^{1/2}.$$
\end{theorem}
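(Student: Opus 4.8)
The plan is in two parts: first use the quoted layer‑potential results to upgrade the hypothesis $\sup_t\|u(\cdot,t)\|_{L^2}<\infty$ to the full package of $L^2$ estimates enjoyed by a solution of the $L^2$ Dirichlet problem, thereby reducing the theorem to a \emph{symmetric} bilinear estimate; and second, prove that symmetric estimate by adapting Dahlberg's argument from \cite{D} to the variable‑coefficient, low‑regularity setting.

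\emph{Reduction.} The inequality is trivial unless $M:=\sup_{t>0}\|u(\cdot,t)\|_{L^{2}(\mathbb{R}^{n})}<\infty$ and $\||t\nabla{\bf v}\||+\|N_*{\bf v}\|_{L^{2}}<\infty$, which I henceforth assume; a routine truncation in $t$ and in $|x|$, together with density, makes all integrals and integrations by parts below legitimate. Since $\|A-A_0\|_\infty\le\epsilon_0$ with $A_0$ real symmetric and $t$‑independent, the quoted boundedness and invertibility of the layer potentials for $L$ give solvability, together with uniqueness, of the $L^2$ Dirichlet problem for $L$ in $\mathbb{R}^{n+1}_+$; hence $u$ has an $L^2$ boundary trace $f=u(\cdot,0)$, it is the unique solution with data $f$ among solutions with $\sup_t\|u(\cdot,t)\|_{L^2}<\infty$, and one has the companion bounds
$$\|N_*u\|_{L^{2}(\mathbb{R}^{n})}+\||t\nabla u\||\ \le\ C\,M,\qquad C=C(n,\lambda,\Lambda)$$
(the $L^2$ non‑tangential maximal estimate and the accompanying square function estimate for the Dirichlet problem). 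It therefore suffices to prove the symmetric estimate
$$\Bigl|\iint_{\mathbb{R}^{n+1}_+}\nabla u\cdot\overline{\bf v}\,dx\,dt\Bigr|\ \le\ C\bigl(\|N_*u\|_{L^{2}}+\||t\nabla u\||\bigr)\bigl(\|N_*{\bf v}\|_{L^{2}}+\||t\nabla{\bf v}\||\bigr)$$
for any solution $u$ of $Lu=0$ and any ${\bf v}\in W^{1,2}_{\loc}$, with $u$ and ${\bf v}$ now entering symmetrically through their square functions and non‑tangential maximal functions.

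\emph{The symmetric estimate.} Here one follows Dahlberg's scheme. Decompose $\mathbb{R}^{n+1}_+$ using the dyadic level sets of $N_*{\bf v}$: over the open set $\{N_*{\bf v}>2^k\}$ form the associated sawtooth (``tent'') region $\Omega_k$, obtaining a nested family $\Omega_{k+1}\subset\Omega_k$ whose union exhausts the half‑space, and observe that on $\Omega_k\setminus\Omega_{k+1}$ one has $|{\bf v}|\lesssim 2^k$ pointwise, since every such point is nontangentially accessible from $\{N_*{\bf v}\le 2^{k+1}\}$. Write each ``annular'' set $\Omega_k\setminus\Omega_{k+1}$ as an essentially disjoint union of Whitney‑type pieces $R$ — on which, crucially, $t\sim\ell(Q_R)\sim\dist(R,\partial\mathbb{R}^{n+1}_+)$ for an associated surface cube $Q_R$ — and set ${\bf c}_R:=\fint_R{\bf v}$, so $|{\bf c}_R|\lesssim 2^k$. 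Then split $\iint_{\mathbb{R}^{n+1}_+}\nabla u\cdot\overline{\bf v}$ as $\sum_R\iint_R\nabla u\cdot\overline{({\bf v}-{\bf c}_R)}$ (the oscillation part) plus $\sum_R\overline{{\bf c}_R}\cdot\iint_R\nabla u$ (the principal part). The oscillation part is routine: Cauchy--Schwarz on each $R$ against $t^{-1}dx\,dt$, then the Poincaré inequality — legitimate precisely because $t\sim\ell(Q_R)$ on $R$ — to bound $\iint_R|{\bf v}-{\bf c}_R|^2 t^{-1}dx\,dt$ by $C\iint_R|t\nabla{\bf v}|^2 t^{-1}dx\,dt$, then summation in $R$ via a further Cauchy--Schwarz and bounded overlap, yielding at most $C\||t\nabla u\||\,\||t\nabla{\bf v}\||$. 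The principal part is the delicate one: the divergence theorem turns each $\iint_R\nabla u$ into a boundary integral $\int_{\partial R}u\,\vec\nu\,d\sigma$, and one must reorganize $\sum_R\overline{{\bf c}_R}\int_{\partial R}u\,\vec\nu$ by telescoping the contributions of the boundary faces shared between consecutive annuli, so that what survives is controlled — via interior trace and Caccioppoli estimates for $u$ — by the ${\bf c}_R$'s; the packing inequality $\sum_R|{\bf c}_R|^2|Q_R|\lesssim\|N_*{\bf v}\|_{L^2}^2$ and the bounded overlap of the relevant surfaces then convert this into $C\|N_*u\|_{L^2}\|N_*{\bf v}\|_{L^2}$. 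Combining the two parts proves the symmetric estimate, hence, with the Reduction step, the theorem.

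\emph{Main obstacle.} The genuinely new content is the Reduction: well‑posedness and the companion $L^2$ bounds for the Dirichlet problem of a \emph{complex}, $t$‑independent operator $\epsilon_0$‑close to real symmetric — this is the only place the structural hypotheses on $A$ are used, and it is supplied wholesale by the quoted layer‑potential theory. Granted that, the work is to make Dahlberg's real‑variable argument for the symmetric estimate run at the present generality, where $A$ is merely bounded and $t$‑independent and ${\bf v}$ merely $W^{1,2}_{\loc}$: in particular, the telescoping of shared boundary faces in the principal part — together with the attendant covering and packing bookkeeping — must be effected using only interior Caccioppoli and De Giorgi--Nash--Moser estimates for $u$ (no representation formula being available) and carefully enough to remain compatible with the weak regularity of ${\bf v}$. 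I expect that telescoping to be the principal technical hurdle.
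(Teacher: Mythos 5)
Your proposal and the paper part ways decisively after the reduction step, and the point at which they diverge is exactly the point you flag as your ``principal technical hurdle'' but leave unresolved --- so there is a genuine gap.

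The reduction is fine in spirit: the paper's Lemma \ref{l3.0} establishes $\||t\nabla u\||\le C\sup_t\|u(\cdot,t)\|_2$ from the layer-potential representation (and the companion $N_*$ bound is a standard consequence of Theorem \ref{t2.6} plus interior Moser estimates, though the paper never needs it). Where you depart is the ``symmetric estimate.'' You propose to prove
$$\Bigl|\iint_{\mathbb{R}^{n+1}_+}\nabla u\cdot\overline{\bf v}\Bigr|\lesssim\bigl(\|N_*u\|_2+\||t\nabla u\||\bigr)\bigl(\|N_*{\bf v}\|_2+\||t\nabla{\bf v}\||\bigr)$$
by running Dahlberg's sawtooth/tent decomposition, splitting into oscillation and principal parts, and telescoping the shared Whitney faces. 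But the paper states explicitly, in the introduction, that ``Dahlberg's original method seems inapplicable to the variable coefficient case, unless the coefficients are differentiable and satisfy an appropriate sort of Carleson condition as in the work of Kenig and Pipher,'' and that ``in lieu of the special change of variable, we use recently obtained results of \cite{AAAHK}.'' Dahlberg's telescoping works after the Kenig--Stein/Maz'ya regularized pullback precisely because the transformed operator enjoys a Carleson condition on its coefficient gradient; in the present setting, where $A$ is merely $L^\infty$ and $t$-independent, that structure is absent, and there is no evident way to close the principal-part estimate --- a naive face-by-face bound loses summability over scales, and you do not indicate where the needed cancellation from $\vec\nu$ would come from. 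Identifying the telescoping as ``the principal technical hurdle'' and ``expecting'' it to go through is a roadmap, not an argument.

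The paper's actual proof never decouples $u$ from the equation after the reduction. It integrates by parts twice in $t$ (Lemma \ref{l3.0} and the chain in \eqref{eq3.2}--\eqref{eq3.3}), isolates the critical term $\iint\partial_t^3 u\,\overline{\mathrm v_{n+1}}\,t^2$, and then exploits $t$-independence and the semigroup/layer-potential representation $u(\cdot,t)=\mathcal{D}_t(-\tfrac12 I+K)^{-1}f$ to write $\partial_t^3u(\cdot,2t)=\partial_t^2\mathcal{D}_t(-\tfrac12 I+K)^{-1}\partial_t u(\cdot,t)$. Passing to the adjoint moves $\partial_t^2\mathcal{D}_t$ onto ${\bf v}$ as $\partial_{\nu^*}\partial_t^2 S^*_{-t}$, and the whole thing reduces to the square-function bound \eqref{eq3.5} for $t^2\nabla D_{n+1}^2 S^*_{-t}{\bf v}(\cdot,2t)$, which is then proved by a $T(1)$/Carleson-measure style decomposition using Lemmas \ref{l2.9a}, \ref{l2.11a} and \ref{l2.15}. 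That is, the paper replaces the missing change of variable by the specific kernel and square-function estimates for the single-layer potential --- information that your symmetric estimate, which treats $u$ only through its square function and $N_*u$, has thrown away. If you want to salvage your approach, the burden is on the principal part: you would need to show that the telescoped boundary contribution is controlled by $\|N_*u\|_2\|N_*{\bf v}\|_2$ using only Caccioppoli and De Giorgi--Nash, and the author's remark strongly suggests this fails without additional structure on the coefficients.
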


We remark that the result is new even in the case of real, symmetric coefficients.  
The analogous result was proved by Dahlberg
for harmonic functions in Lipschitz graph domains, using a special change of variable found by 
Kenig and Stein,
and independently by Maz'ya.  Our theorem includes that of Dahlberg, as may be seen by pulling back under the mapping $(x,t) \to (x,\varphi (x) +t)$.  Dahlberg's original method seems inapplicable 
to the variable coefficient case, unless the
coefficients are differentiable and satisfy an appropriate sort of Carleson condition
as in the work of Kenig and Pipher \cite{KP}.  In the present setting, in lieu of the special change of variable, we use recently obtained results of \cite {AAAHK} concerning the boundedness and invertibility of layer potentials associated to variable coefficient $t$-independent operators.

The paper is organized as follows.  In the next section, we recall some  of the aforementioned results 
of \cite{AAAHK}.  In Section \ref{s3}, we prove Theorem \ref{t1.3},
and in Section \ref{s4} we discuss the analogue of another result of \cite{D} 
concerning the domain of the infinitesimal generator of the Poisson semigroup for the equation $Lu=0$
in $\mathbb{R}_+^{n+1}$.

Let us now set some notation  and terminology that we shall use in the 
sequel.  We shall employ the standard convention that the generic constant $C$
is allowed to vary from one instance to the next, and may depend upon dimension
and ellipticity.  The symbol $\fint$ denotes the mean value, i.e.,
$\fint_E f \equiv |E|^{-1} \int_E f .$  We shall use the notation
$$D_j\equiv\partial_{x_j}, \,\,\,1\leq j\leq n+1,$$
bearing in mind that $x_{n+1} = t,$ and we use $e_j,\, 1\leq j\leq n+1,$ to indicate the standard
unit basis vector in the $x_j$ direction. The symbol $\nabla$ denotes the full $(n+1)$-dimensional 
gradient, acting in both $x$ and $t$, and we use $\nabla_x$ or $\nabla_\|$ to indicate the $n$-dimensional gradient acting only in $x$.  We use $ad\!j$ to denote the hermitian
adjoint of an operator acting on functions defined on $\mathbb{R}^n$.
We define the homogeneous Sobolev space $\dot{L}_{1}^{2}$ to be the 
completion of $C_0^{\infty}$ with respect to the seminorm
$\|\nabla F\|_2.$  As is well known, for $n\geq 2$, this space can be identified (modulo constants) with the space $I_1(L^2) \equiv \Delta^{-1/2}(L^2)$. 
We write $F \to f\,n.t.$ to mean that for $a.e.$ 
 $x \in \mathbb{R}^n$, $F(y,t) \to f(x)$, as $(y,t) \to (x,0)$, with
 $(y,t) \in \gamma(x).$

\section{Results for variable coefficient layer potentials \label{s2}}

We now recall the definitions of the layer potentials.  
We first note that by \eqref{eq1.small}, the stability result of \cite{A}, 
and the classical De Giorgi-Nash Theorem 
\cite{DeG,N}, solutions of $Lu=0$ are locally H\"older continuous.
Let $\Gamma (x,t,y,s)$ denote 
the fundamental solution for $L$ (we refer the reader to
\cite{HK} for the construction of, and estimates for, $\Gamma$ in the case of complex coefficients,
assuming De Giorgi-Nash bounds).
By $t$-independence,
\begin{equation}\label{2.1a}\Gamma (x,t,y,s) \equiv \Gamma (x,t-s,y,0).\end{equation}
We define the single and double layer potentials, respectively, in the usual way:

 \begin{equation}
\begin{split}\label{eq1.5}S_{t}f(x) & \equiv\int_{\mathbb{R}^{n}}\Gamma(x,t,y,0)\,f(y)\,dy, \,\,\, t\in \mathbb{R}\\
\mathcal{D}_{t}f(x) & 
\equiv\int_{\mathbb{R}^{n}}\overline{\partial_{\nu^*(y)} \Gamma^*
(y,0,x,t)}\,f(y)\,dy,\,\,\, t \neq 0,\end{split}
\end{equation}
 where $\partial_{\nu^*}$ is the adjoint exterior
 conormal derivative; i.e., if $A^{\ast}$ denotes the hermitian adjoint of $A$, then
\begin{equation}\label{eq2.2a}
\partial_{\nu^*(y)} \Gamma^* (y,0,x,t)
=-\sum^{n+1}_{j=1}A_{n+1,j}^{\ast}(y)\frac{\partial \Gamma^*}{\partial
y_{j}}(y,0,x,t)=-e_{n+1}\cdot A^{\ast}(y)
\nabla_{y,s}\Gamma^*(y,s,x,t) \mid_{s=0}\end{equation}
 (recall that $y_{n+1}=s$), where $e_{n+1}\equiv (0,...0,1)$ is the unit basis vector in the $t$ direction.
 Here, $\Gamma^*$ is the fundamental solution for
 $L^*$, the hermitian adjoint of $L$.  Thus, $\Gamma^*$ is the conjugate transpose of $\Gamma$; i.e.,
 $$\overline{\Gamma^*(y,s,x,t)} =\Gamma(x,t,y,s).$$   We also define (formally) the
 boundary singular integral
\begin{equation}
\label{eq1.6}Kf(x) \equiv
  ``p.v."\int_{\mathbb{R}^{n}}\overline{\partial_{\nu^*} \Gamma^* (y,0,x,0)}\,f(y)\,dy.
\end{equation}
 (For  the precise definiton of the latter operator in the 
 case of non-smooth coefficients,   see \cite{AAAHK}, Section 4).
In a departure from tradition impelled by the context of complex coefficients,
 $K^*, S^*$ and $\mathcal{D}^*$ will denote the analogues of $K,S$ and $\mathcal{D}$ corresponding to $L^*$.  

In order to prove our Theorem, we shall require some of
 the main results of \cite{AAAHK}, which we summarize as follows:
\begin{theorem} \cite{AAAHK}.   \label{t2.6} Suppose that $L$ satisfies the hypotheses of Theorem 1.3.  There exists a small constant $\epsilon_0= \epsilon_0(n,\lambda,\Lambda)$ such that if $\epsilon$ in \eqref{eq1.small} satisfies $\epsilon \leq \epsilon_0$,
then the layer potential operators $\pm \frac{1}{2} I
+ K,\, \pm\frac{1}{2}I + K^*$ are isomorphisms on $L^2 (\mathbb{R}^n)$,
with the implicit constants depending only upon
dimension and ellipticity.  
In addition,
\begin{equation}\label{eq2.squarefunction}\sup_{t>0}\|\mathcal{D}_t f\|_2+
\||t \nabla \partial_t S_{\pm t} f\|| +\sup_{1\leq j \leq n}
\||t  \partial_t S_{\pm t} D_jf\||\leq C \|f\|_2,\end{equation}
and $\mathcal{D}_{\pm t}f \to (\pm\frac{1}{2}I + K)f$ $n.t.$ and in $L^2$, for $f\in L^2$.
Moreover, the corresponding statements hold also for $\mathcal{D}_t^*, K^*$ and $S_t^*$.  Finally, 
the solution to the Dirichlet problem
\begin{equation}
\begin{cases} Lu=0\text{ in }\mathbb{R}_{+}^{n+1}=\{(x,t)\in\mathbb{R}^{n}\times(0,\infty)\}\\ 
\lim_{t\to 0}u(\cdot,t)=f\text{ in }
L^{2}(\mathbb{R}^{n}) \text{ and } n.t.\\ 
\sup_{t>0}\|u(\cdot,t)\|_{L^2(\mathbb{R}^n)}<\infty,
\end{cases}\tag{D2}\label{D2}\end{equation}
which exists by virtue of the aforementioned facts about layer potentials, is unique.
\end{theorem}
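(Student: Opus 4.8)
This theorem collects several results of \cite{AAAHK}; here I outline the route I would follow to prove them. The organizing idea is to replace the second-order equation by a first-order one. Because $A = A(x)$ is $t$-independent, $u$ solves $Lu = 0$ in $\mathbb{R}^{n+1}_+$ exactly when its conormal gradient $F$ — the $\mathbb{C}^{n+1}$-valued function pairing the conormal derivative of $u$ with its tangential gradient $\nabla_x u$ — satisfies the first-order system $\partial_t F + DB\,F = 0$, where $D$ is a constant-coefficient self-adjoint first-order (Dirac-type) operator on $L^2(\mathbb{R}^n;\mathbb{C}^{n+1})$ and $B = B(x)$ is a bounded, strictly accretive multiplication operator manufactured from $A$. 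I would then invoke the fact — valid for \emph{all} $t$-independent complex $L^\infty$ coefficients, with no smallness assumption — that $DB$ is bisectorial and has a bounded holomorphic functional calculus on $L^2$, equivalently that it satisfies the quadratic estimate $\int_0^\infty \|tDB\,e^{-t|DB|}g\|_2^2\,\frac{dt}{t}\simeq\|g\|_2^2$; this is the Kato-type estimate assembled from the references cited in \cite{AAAHK}. From the functional calculus one builds the semigroup $e^{-t|DB|}$ on the spectral subspace $E_A^+$ of $DB$ attached to the right half-plane, and one verifies that $S_t f$, $\mathcal{D}_t f$ and their $t$-derivatives are explicit operator-valued functions of $DB$ applied to the data. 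The quadratic estimate then yields the square function bounds \eqref{eq2.squarefunction} and $\sup_{t>0}\|\mathcal{D}_t f\|_2\lesssim\|f\|_2$; the boundary behaviour of the semigroup, combined with the De Giorgi--Nash pointwise bounds on $\Gamma$ and $\nabla\Gamma$ from \cite{HK} (needed to match these abstract objects to the classical definitions \eqref{eq1.5}--\eqref{eq1.6}), gives the non-tangential and $L^2$ convergence $\mathcal{D}_{\pm t}f\to(\pm\tfrac12 I + K)f$.

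Next I would address the invertibility of $\pm\tfrac12 I + K$ on $L^2$. The jump relations identify $\pm\tfrac12 I + K$ with the boundary trace of $\mathcal{D}_{\pm t}$, which in the first-order picture is a Cauchy-type operator built from the spectral projections onto $E_A^{\pm}$; its invertibility is equivalent to a transversality (topological complementarity) condition relating $E_A^+$, $E_A^-$ and a fixed reference subspace. For the reference matrix $A = A_0$, which is real symmetric, $L_0$ is self-adjoint and the layer potentials $\pm\tfrac12 I + K_0$ are invertible on $L^2$ by classical Rellich-identity arguments — the $L^2$-solvability theory for $t$-independent real symmetric equations. The plan is then a perturbation argument: the assignment $A\mapsto DB\mapsto(\text{spectral projections})\mapsto(\pm\tfrac12 I + K)$ is holomorphic from an $L^\infty$-ball about $A_0$ into the bounded operators on $L^2$, with \emph{uniform} bounds, so invertibility — being an open condition — persists whenever $\|A - A_0\|_\infty\leq\epsilon_0$. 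Running the same reasoning with the adjoint matrix $A^*$ (still within $\epsilon$ of $A_0 = A_0^*$) handles $\pm\tfrac12 I + K^*$ and the remaining starred assertions, including the square function and convergence statements for $\mathcal{D}_t^*$ and $S_t^*$.

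It remains to treat the Dirichlet problem \eqref{D2}. Solvability is then immediate: $u := \mathcal{D}_t((\tfrac12 I + K)^{-1}f)$ solves $Lu=0$, has $\sup_{t>0}\|u(\cdot,t)\|_2<\infty$, and tends to $f$ both $n.t.$ and in $L^2$. For uniqueness I would take a solution $w$ of \eqref{D2} with $f = 0$ and argue that $w\equiv 0$. The bound $\sup_t\|w(\cdot,t)\|_2<\infty$, fed through Caccioppoli and De Giorgi--Nash--Moser estimates, forces the conormal gradient $G$ of $w$ to lie in $E_A^+$ and to decay as $t\to\infty$; hence $G(\cdot,t) = e^{-t|DB|}G(\cdot,0)$, and $w$ is recovered from $G(\cdot,0)$ by a layer-potential (Green's) representation formula. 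Vanishing of the $n.t.$ and $L^2$ boundary trace of $w$ then kills $G(\cdot,0)$, so $\nabla w\equiv 0$, and since $w$ is $L^2$ on each slice, $w\equiv 0$. Alternatively one may run a Rellich--Pohozaev identity on the slabs $\{0<t<T\}$, integrate by parts, and use the decay and boundary vanishing to force $\||t\nabla w\|| = 0$ directly.

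The part I expect to be genuinely hard — and where \cite{AAAHK} does the substantial work — is twofold. First, showing that the functional calculus of $DB$, hence the spectral projections and hence the layer potential operators themselves, depend \emph{holomorphically} on the matrix $A$ in the $L^\infty$ topology, with bounds uniform on a neighbourhood of $A_0$: the difficulty is that $A$ carries no regularity, so one cannot perturb by differentiating in $x$, and must instead exploit the uniform quadratic estimates together with the off-diagonal (Gaffney) decay of the resolvents $(I + itDB)^{-1}$. Second, identifying these abstract operators with the classical layer potentials $S_t$, $\mathcal{D}_t$, $K$ built from $\Gamma$ in \eqref{eq1.5}--\eqref{eq1.6}, and deriving the attendant jump relations and non-tangential convergence — which requires the De Giorgi--Nash pointwise bounds on $\Gamma$ and $\nabla\Gamma$ of \cite{HK} and a careful passage to the boundary. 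Once those two pillars are in place, invertibility is a soft perturbation off the real symmetric case, and uniqueness in \eqref{D2} drops out of the semigroup representation of $L^2$-bounded solutions.
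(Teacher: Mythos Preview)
The paper does not prove Theorem~\ref{t2.6}; it is quoted from \cite{AAAHK} as a black-box input, so there is no ``paper's own proof'' to match. Your outline is a coherent and essentially correct route to the same conclusions, but it is \emph{not} the route taken in \cite{AAAHK}. You organize everything through the first-order reduction $\partial_t F + DBF = 0$ and the bounded holomorphic functional calculus of $DB$ (the Axelsson--Keith--McIntosh framework), realizing the layer potentials as operator-valued functions of $DB$ and reading off the square-function and invertibility statements from spectral theory. By contrast, \cite{AAAHK} works directly with the second-order fundamental solution $\Gamma$ and the classical layer potentials \eqref{eq1.5}: the square-function bound \eqref{eq2.squarefunction} is obtained by local $Tb$-type arguments (built on the Kato solution \cite{AHLMcT}) for the real symmetric base operator $L_0$, and then extended to nearby complex $A$ by proving that $A \mapsto \nabla S_t$, $A \mapsto \mathcal{D}_t$, etc., are analytic as maps from an $L^\infty$-ball about $A_0$ into the relevant operator spaces; invertibility of $\pm\tfrac12 I + K$ then follows from Rellich at $A_0$ plus the method of continuity. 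Your approach is conceptually cleaner and yields the quadratic estimates for \emph{all} complex $A$ without smallness (smallness enters only when identifying the abstract semigroup with the $\Gamma$-based potentials via De~Giorgi--Nash); the \cite{AAAHK} approach stays closer to classical potential theory and makes the analyticity in $A$ explicit, which is precisely the perturbation mechanism the present paper later invokes (e.g.\ in \eqref{eq4.lip}). One caution on your uniqueness sketch: you assume the conormal gradient of an $L^2$-bounded null solution lies in $L^2$ on slices and hence in $E_A^+$, but this is not automatic from $\sup_t\|w(\cdot,t)\|_2 < \infty$ alone --- in \cite{AAAHK} this gap is closed by a separate Fatou-type theorem (quoted here as Lemma~\ref{l2.12}), and your Rellich--Pohozaev alternative needs the same ingredient to justify the boundary terms.
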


We shall also require the following technical facts.
\begin{lemma}\label{l2.9a} \cite{AAAHK} (Lemma 2.2)
Suppose that $L$ satisfies the hypotheses of Theorem \ref{t1.3},
with $\epsilon \leq \epsilon_0$.  Set $K_t(x,y) =t\partial_t^2\,\Gamma(x,t,y,0)$.
Then \begin{equation}\label{eq2.10a}
|K_t(x,y)|\leq C \frac{t}{(t + |x-y|)^{n+1}}.
\end{equation}
\end{lemma}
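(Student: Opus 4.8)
The plan is to combine the standard pointwise bound for the fundamental solution with interior regularity, exploiting crucially that the coefficient matrix $A$ does not depend on $t$. Two ingredients are needed. First, since $n\ge 2$ and the De Giorgi-Nash-Moser estimates hold for $L$ (by \eqref{eq1.small} and the stability result of \cite{A}), the fundamental solution satisfies the size bound $|\Gamma(X,Y)|\le C\,|X-Y|^{1-n}$; see \cite{HK}. Second, because $A$ is $t$-independent, whenever $Lw=0$ in a region one also has $L(\partial_t w)=0$ and $L(\partial_t^2 w)=0$ there: the translate $(x,t)\mapsto\Gamma(x,t+h,y,0)$ is a null solution of $L$ wherever $\Gamma(\cdot,\cdot,y,0)$ is, hence so is each difference quotient in $t$, and Caccioppoli's inequality furnishes bounds on the gradients of these difference quotients that are uniform in $h$, so one may pass to the limit and identify $\partial_t\Gamma(\cdot,\cdot,y,0)$, and then (iterating) $\partial_t^2\Gamma(\cdot,\cdot,y,0)$, as weak solutions of $Lw=0$ away from the pole $(y,0)$.

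With these in hand, fix $(x,t)$ with $t>0$ and set $R\equiv t+|x-y|$. Since $\dist\big((x,t),(y,0)\big)=\sqrt{t^{2}+|x-y|^{2}}\ge R/\sqrt2$, the ball $B_{R/4}(x,t)$ and all its concentric subballs avoid the pole, while $|X-(y,0)|\ge R/3$ for every $X\in B_{R/4}(x,t)$; hence $\Gamma(\cdot,\cdot,y,0)$, $\partial_t\Gamma$ and $\partial_t^2\Gamma$ are all null solutions of $Lw=0$ on these balls. Combining Caccioppoli's inequality with the interior local boundedness estimate (and using $|\partial_t w|\le|\nabla w|$) gives, for any null solution $w$ of $Lw=0$ on $B_{2\rho}(x,t)$, the bound $\sup_{B_{\rho}(x,t)}|\partial_t w|\le C\rho^{-1}\sup_{B_{2\rho}(x,t)}|w|$. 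Applying this twice, first with $w=\partial_t\Gamma$ and then with $w=\Gamma$, we obtain
\[
|\partial_t^{2}\Gamma(x,t,y,0)|\;\le\;\sup_{B_{R/16}(x,t)}|\partial_t^{2}\Gamma|\;\le\;\frac{C}{R}\sup_{B_{R/8}(x,t)}|\partial_t\Gamma|\;\le\;\frac{C}{R^{2}}\sup_{B_{R/4}(x,t)}|\Gamma(\cdot,\cdot,y,0)|.
\]
Since $|\Gamma(X,(y,0))|\le C|X-(y,0)|^{1-n}\le C\,R^{1-n}$ throughout $B_{R/4}(x,t)$, the right-hand side is at most $C\,R^{-2}R^{1-n}=C\,(t+|x-y|)^{-n-1}$, and multiplying by $t$ yields \eqref{eq2.10a}.

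The only step requiring genuine care is the assertion, sketched in the first paragraph, that $\partial_t\Gamma$ and $\partial_t^2\Gamma$ are bona fide weak solutions of $Lw=0$ away from the pole, since this is what licenses applying the interior estimates to them; everything after that is bookkeeping, the radii having been chosen so that all relevant balls stay comfortably away from $(y,0)$. One could alternatively quote the corresponding fact directly from \cite{AAAHK}.
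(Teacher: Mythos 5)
Your proof is correct. The paper does not actually prove this lemma --- it simply cites it as Lemma 2.2 of \cite{AAAHK} --- so there is no in-paper argument to compare against, but your approach is the standard one that \cite{AAAHK} uses: exploit $t$-independence to show (via uniform Caccioppoli bounds on difference quotients in $t$) that $\partial_t\Gamma$ and $\partial_t^2\Gamma$ are null solutions away from the pole; combine Caccioppoli with the Moser local boundedness estimate (valid here because the De Giorgi--Nash--Moser bounds persist under the small complex perturbation, by \cite{A}) to trade each $t$-derivative for a factor $R^{-1}$; and conclude with the size bound $|\Gamma(X,Y)|\le C|X-Y|^{1-n}$ from \cite{HK}, the exponent being correct since the ambient dimension is $n+1$. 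The radii $R/16$, $R/8$, $R/4$ and the lower bound $|X-(y,0)|\ge R/3$ on $B_{R/4}(x,t)$ all check out. One small clarification worth making explicit: in the chain $\sup_{B_\rho}|\partial_t w|\le C\rho^{-1}\sup_{B_{2\rho}}|w|$, the local boundedness estimate is applied to $\partial_t w$ (which you have shown is itself a solution), not to $w$; Caccioppoli is then applied to $w$. Your phrasing compresses these two applications, but the logic is in place.
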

\begin{lemma}\label{l2.11a}\cite{AAAHK} (Lemma 2.8)
Suppose that $L$ satisfies the hypotheses of Theorem \ref{t1.3},
with $\epsilon \leq \epsilon_0$. Then
$$\|t^2\nabla\partial_t^2S_{\pm t}^* f\|_{L^2(\mathbb{R}^n)} \leq C\|f\|_2.$$ 
\end{lemma}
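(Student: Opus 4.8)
The plan is to prove the square-function bound $\|t^2\nabla\partial_t^2 S^*_{\pm t}f\|_{L^2(\mathbb{R}^n)}\leq C\|f\|_2$ by bootstrapping from the analogous bound for $\nabla\partial_t S^*_{\pm t}$ already contained in Theorem \ref{t2.6} (applied to $L^*$ in place of $L$, as the theorem permits), together with the pointwise kernel decay of Lemma \ref{l2.9a}. Throughout I work with $S^*_t$, whose kernel is (the conjugate transpose of) $\Gamma^*(y,0,x,t)=\overline{\Gamma(x,t,y,0)}$, so that $\partial_t^2 S^*_t f(x) = \int_{\mathbb{R}^n}\overline{K_t(x,y)/t}\,f(y)\,dy$ in the notation of Lemma \ref{l2.9a}; we treat $t>0$, the case $t<0$ being symmetric. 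The key structural point is that $u(x,t)\equiv S^*_t f(x)$ solves $L^* u = 0$ in $\mathbb{R}^{n+1}_+$, hence so does each $t$-derivative $\partial_t^j u$ by $t$-independence of the coefficients; thus $\partial_t^2 S^*_t f$ is itself a solution of $L^*$, and interior (Caccioppoli / De Giorgi-Nash) estimates relate its gradient at height $t$ to its size on a surrounding Whitney ball.

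The main steps are as follows. \emph{Step 1: reduce to an $L^2$ bound on $\partial_t$ of a solution.} By the Caccioppoli inequality applied to the $L^*$-solution $w\equiv\partial_t^2 S^*_t f$ on the Whitney region $\{(y,s): |y-x|<t/4,\ |s-t|<t/4\}$, we have $|t\nabla w(x,t)|^2\lesssim \fint\!\!\fint_{|y-x|<t/2,\,|s-t|<t/2}|s\, w(y,s)|^2\,dy\,ds$ (after adjusting constants in $t$ versus $s$, which are comparable on the Whitney region). Integrating in $x$ and using Fubini, $\|t^2\nabla\partial_t^2 S^*_t f\|_2^2 \lesssim \|s^2\partial_s^2 S^*_s f\|_2^2$ uniformly, with an average over $s$ comparable to $t$; so it suffices to prove $\sup_{t>0}\|t^2\partial_t^2 S^*_t f\|_2\leq C\|f\|_2$, i.e.\ an $L^2$ bound without the extra gradient. \emph{Step 2: $L^2$ bound on $t^2\partial_t^2 S^*_t$ via the kernel.} Write $t^2\partial_t^2 S^*_t f(x)=\int \overline{t K_t(x,y)}\,f(y)\,dy$. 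By Lemma \ref{l2.9a}, $|t K_t(x,y)|\lesssim t^2/(t+|x-y|)^{n+1}$, whose $L^1(dy)$ norm is $\lesssim t$ — not bounded. So this crude bound fails, and we must instead exploit cancellation: write $t^2\partial_t^2 S^*_t = \partial_t\big(t^2\partial_t^2 S^*_t\big) \cdot$(integrate) — more precisely, use $t\partial_t(\partial_t S^*_t) = \partial_t S^*_t\cdot$... the clean route is to integrate the already-controlled quantity. Since $\||t\nabla\partial_t S^*_t f\||\leq C\|f\|_2$ by Theorem \ref{t2.6}, in particular $\||t\partial_t^2 S^*_t f\||\leq C\|f\|_2$; combined with Step 1's pointwise bound we are trying to upgrade an $\||\cdot\||$-norm to an $L^\infty_t L^2_x$ bound, which requires an additional $t$-power and is exactly what the kernel estimate should supply once cancellation is used.

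\emph{Step 3: the cancellation mechanism.} The correct device is the identity $t^2\partial_t^2 S^*_t f(x) = -\int_t^\infty \frac{d}{ds}\big(s^2\partial_s^2 S^*_s f(x)\big)\,ds + (\text{decay at }\infty)$, where $\frac{d}{ds}(s^2\partial_s^2 S^*_s f) = 2s\,\partial_s^2 S^*_s f + s^2\partial_s^3 S^*_s f$. The first term is handled by Lemma \ref{l2.9a} directly (it gives $s\cdot$ kernel of size $s/(s+|x-y|)^{n+1}$, and one applies Schur's test / the $\widetilde N_*$-type maximal bound against $f$, integrating $ds/s$ — this is a standard square-function-to-$L^2$ passage); the second, involving the third $t$-derivative, is again an $L^*$-solution and is controlled by Caccioppoli from the second derivative, reducing to quantities already in hand. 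Assembling, $\|t^2\partial_t^2 S^*_t f\|_{L^2_x}$ is bounded by a sum of terms each of the form $\|\int_t^\infty (\cdots) ds/s\|_{L^2_x}$ with $(\cdots)$ square-integrable in $ds/s$ with $L^2_x$ norm $\leq C\|f\|_2$; Minkowski's integral inequality then gives the bound. Finally feed this back into Step 1 to conclude $\|t^2\nabla\partial_t^2 S^*_t f\|_2\leq C\|f\|_2$.

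\emph{Main obstacle.} The delicate point is Step 3: extracting enough cancellation. The naive kernel size of $t^2\partial_t^2 S^*_t$ is borderline non-integrable (its $L^1$ mass grows like $t$), so no pointwise kernel bound alone suffices; one genuinely needs to trade derivatives against the vertical integration $ds/s$, using that one extra $\partial_s$ lands on a kernel with one more power of decay (by Lemma \ref{l2.9a} and its differentiated analogue, or equivalently by Caccioppoli on the solution $\partial_s^2 S^*_s f$). Making the decay-at-infinity term in the fundamental-theorem-of-calculus step legitimate — i.e.\ verifying that $s^2\partial_s^2 S^*_s f \to 0$ in $L^2_x$ as $s\to\infty$ for $f\in L^2$, which follows from the $\||\cdot\||$ bound of Theorem \ref{t2.6} along a sequence plus monotonicity/Caccioppoli — is a second technical wrinkle but not a serious one. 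All other ingredients (Caccioppoli, Schur's test, Minkowski) are routine.
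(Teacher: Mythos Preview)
The paper does not prove this lemma at all: it is simply quoted from \cite{AAAHK} (their Lemma 2.8) and used as a black box in Section~\ref{s3}. So there is no ``paper's own proof'' to compare to. That said, your proposal has a genuine and fatal gap.

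Your Step~1 reduces the desired estimate to the claim
\[
\sup_{t>0}\bigl\|t^{2}\partial_{t}^{2}S^{*}_{t}f\bigr\|_{L^{2}(\mathbb{R}^{n})}\leq C\|f\|_{2},
\]
and Steps~2--3 attempt to prove this. But this intermediate claim is \emph{false}, already for the Laplacian. In that case one computes on the Fourier side that $t^{2}\partial_{t}^{2}S_{t}$ is the multiplier $\tfrac12\,t^{2}|\xi|\,e^{-t|\xi|}=\tfrac{t}{2}\cdot(t|\xi|)e^{-t|\xi|}$, whose $L^{\infty}_{\xi}$ norm is $\sim t$. Concretely, if $\hat f_{a}=\mathbf 1_{\{|\xi|\leq a\}}$ then $\sup_{t>0}\|t^{2}\partial_{t}^{2}S_{t}f_{a}\|_{2}/\|f_{a}\|_{2}\gtrsim a^{-1}\to\infty$ as $a\to 0$. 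So no ``cancellation mechanism'' in Step~3 can succeed, and indeed your Minkowski step there already breaks down: after the fundamental theorem of calculus you must integrate $\|2s\,\partial_{s}^{2}S^{*}_{s}f\|_{2}\,ds$ over $(t,\infty)$, and this integrand is merely bounded (by Lemma~\ref{l2.9a}), not integrable.

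There is also a more local error in Step~1: Caccioppoli for bounded measurable coefficients gives only an $L^{2}$-averaged gradient bound on a Whitney region, not the pointwise estimate $|t\nabla w(x,t)|^{2}\lesssim\fint\!\!\fint|w|^{2}$ you wrote. Pointwise control of $\nabla w$ is unavailable here since $\nabla w$ is not itself a solution. The correct use of Caccioppoli in this setting only yields $\fint_{t}^{2t}\|s^{2}\nabla\partial_{s}^{2}S^{*}_{s}f\|_{2}^{2}\,\tfrac{ds}{s}\leq C\|f\|_{2}^{2}$ from the kernel bound $\|s\,\partial_{s}^{2}S^{*}_{s}f\|_{2}\leq C\|f\|_{2}$; upgrading this $s$-average to a supremum is exactly the nontrivial content of the lemma, and requires an additional ingredient (in \cite{AAAHK}, $L^{2}$--$L^{2}$ off-diagonal/Gaffney-type decay for $\nabla S^{*}_{t}$ combined with the pointwise bounds on the higher $t$-derivatives of $\Gamma^{*}$), not the reduction you propose.
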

\begin{lemma}\label{l2.15}. 
Suppose that $\{ R_t\}_{t>0}$ is a 
family of operators defined by $$R_tf(x)\equiv \int_{\mathbb{R}^n}K_t(x,y) f(y) dy, $$
where the kernel $K_t$ satisfies  \eqref{eq2.10a}.  Suppose also that $R_t1=0$ for all $t\in \mathbb{R}$. Then for $h\in \dot{L}^2_1 (\mathbb{R}^n)$,
\begin{equation}\label{eq2.15a}\int_{\mathbb{R}^n}|R_th|^2\leq Ct^2\int_{\mathbb{R}^n}|\nabla_x h|^2.\end{equation}
\end{lemma}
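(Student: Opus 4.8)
The plan is to combine the ($t$-uniform) boundedness of $R_t$ on $L^2(\mathbb{R}^n)$ with a Calder\'on reproducing formula, organising the scales above and below $t$ separately. The $L^2$-boundedness, with bound independent of $t$, is immediate from \eqref{eq2.10a} and Schur's test, because $\int_{\mathbb{R}^n} t\,(t+|w|)^{-(n+1)}\,dw = C_n$; so the task is to upgrade this to the gradient estimate using the extra hypothesis $R_t 1 = 0$.

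Fix Littlewood--Paley operators $Q_s := -s\,\partial_s e^{s^2\Delta}$, so that $\int_0^\infty Q_s\,\tfrac{ds}{s} = I$ on $C_0^\infty$; their Fourier multiplier $m(s|\xi|) = 2(s|\xi|)^2 e^{-(s|\xi|)^2}$ vanishes at $0$ and decays rapidly, whence $|m(s|\xi|)| \le Cs|\xi|$ and therefore $\|Q_s h\|_2 \le Cs\|\nabla_x h\|_2$ for every $s>0$; since $Q_s h$ is moreover essentially frequency-localised to $|\xi|\lesssim 1/s$, one also has $\|Q_s h\|_\infty \le Cs^{1-n/2}\|\nabla_x h\|_2$ and $\|\nabla_x^2 Q_s h\|_2 \le Cs^{-1}\|\nabla_x h\|_2$. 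For $h\in C_0^\infty$ (dense in $\dot{L}^2_1$) one writes $R_t h = \int_0^\infty R_t Q_s h\,\tfrac{ds}{s}$, so that $\|R_t h\|_2 \le \bigl(\int_0^t + \int_t^\infty\bigr)\|R_t Q_s h\|_2\,\tfrac{ds}{s}$.

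For the small scales $s\le t$ I would use only the $L^2$-boundedness of $R_t$: $\|R_t Q_s h\|_2 \le C\|Q_s h\|_2 \le Cs\|\nabla_x h\|_2$, so $\int_0^t\|R_t Q_s h\|_2\,\tfrac{ds}{s} \le C\|\nabla_x h\|_2\int_0^t ds = Ct\|\nabla_x h\|_2$. For the large scales $s>t$ one must use the cancellation: since $R_t 1 = 0$, $R_t Q_s h(x) = \int_{\mathbb{R}^n} K_t(x,y)\bigl(Q_s h(y) - Q_s h(x)\bigr)\,dy$, and one wants to profit from the smoothness of $Q_s h$ at the scale $s\gg t$. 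The target is an estimate of the form $\|R_t Q_s h\|_2 \le C(t/s)^{1+\delta}\|Q_s h\|_2 \le Ct\,(t/s)^{\delta}\|\nabla_x h\|_2$ for some $\delta>0$; granting it, $\int_t^\infty\|R_t Q_s h\|_2\,\tfrac{ds}{s} \le Ct\|\nabla_x h\|_2\int_t^\infty (t/s)^\delta\,\tfrac{ds}{s} = C\delta^{-1}t\|\nabla_x h\|_2$, and summing the two contributions (then removing the restriction $h\in C_0^\infty$ by density) gives the lemma.

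The large-scale bound is where I expect the real difficulty. Taylor-expanding $Q_s h(y)-Q_s h(x) = \nabla_x Q_s h(x)\cdot(y-x) + O\!\bigl(|x-y|^2\,\|\nabla_x^2 Q_s h\|_\infty\bigr)$ and inserting this would be routine were the kernel integrable against $|x-y|$ --- but \eqref{eq2.10a} just barely fails this, as $\int_{\mathbb{R}^n} t|x-y|\,(t+|x-y|)^{-(n+1)}\,dy$ diverges logarithmically. Thus one cannot detach the first-moment integral $\int K_t(x,y)(y-x)\,dy$ from the cancellation; instead I would peel off the dyadic annuli $A_j=\{2^{j-1}t\le|x-y|<2^j t\}$, use $R_t 1 = 0$ to redistribute the identity $\int K_t(x,y)\,dy=0$ among the $A_j$ --- pairing the annular mass $\int_{A_j}K_t(x,y)\,dy=O(2^{-j})$ with the oscillation of $Q_s h$ over $A_j$ (controlled by a Poincar\'e inequality together with the frequency localisation of $Q_s h$) --- and show that the borderline logarithm gets absorbed into a summable power of $2^{-j}$ down to $|x-y|\sim s$, keeping track of the spatial decay of $Q_s h$ so that the resulting estimate is genuinely in $L^2_x$. (Equivalently, via $\dot{L}^2_1 = I_1(L^2)$ one may reduce the whole statement to $\|R_t\,\Delta^{-1/2}\|_{L^2\to L^2}\le Ct$ and study the kernel of the composite operator, where the contribution of $|x-y|\gg t$ is again the substantive point.)
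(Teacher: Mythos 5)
The paper gives no proof here; it declares the result ``a standard exercise in the use of Poincar\'e's inequality'' and defers to \cite{AAAHK}, Lemma 3.5. The intended argument is the direct one: use $R_t1=0$ to write $R_th(x)=\int K_t(x,y)\bigl(h(y)-\fint_{B(x,t)}h\bigr)dy$, decompose into dyadic annuli $A_j=\{|x-y|\sim 2^jt\}$, and control each annular contribution by Poincar\'e's inequality together with a telescoping sum of averages, producing pointwise control by $t\,M(|\nabla h|^2)^{1/2}(x)$ and then $L^2$-boundedness of the maximal function. Your Littlewood--Paley route is a different, equally natural framework, and your reduction of the small-scale piece $s\le t$ is correct.

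You have, however, put your finger on a genuine issue rather than a removable technicality, and I do not think your proposed large-scale fix closes it. The logarithmic borderline you identify is real: with only the size bound \eqref{eq2.10a} and $R_t1=0$, the conclusion \eqref{eq2.15a} is in fact \emph{false}. For a counterexample take $n=1$, $t=1$, and the odd convolution kernel $k(z)=\mathrm{sgn}(z)/z^2$ on $1\le|z|\le N$ (zero elsewhere): it satisfies $|k(z)|\le C(1+|z|)^{-2}$ with $C$ independent of $N$ and $\int k=0$, yet $\widehat{k}(\xi)=-2i\int_1^N\sin(z\xi)z^{-2}dz$ has $|\widehat{k}(\xi)|\gtrsim|\xi|\log(1/|\xi|)$ for $1/N\ll|\xi|\ll1$, so $\|k*\|_{\dot L^2_1\to L^2}\gtrsim\log N$ is unbounded. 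Exactly the same obstruction defeats the Poincar\'e argument (each annulus $A_j$ contributes $\approx 2^{-j}\cdot 2^jt\,M(|\nabla h|^2)^{1/2}=t\,M(|\nabla h|^2)^{1/2}$, and the sum over $j$ diverges) and defeats your annular-redistribution idea: the masses $\int_{A_j}K_t$ are only $O(2^{-j})$ individually, with nothing forcing them to decay faster, and pairing that with the $O(\min(2^jt/s,1))$ oscillation of $Q_sh$ still leaves a log. So the step ``show that the borderline logarithm gets absorbed into a summable power of $2^{-j}$'' cannot be carried out from the stated hypotheses alone.

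What rescues the lemma in its actual applications is an additional H\"older regularity of $K_t$ in the $y$-variable, of the form $|K_t(x,y)-K_t(x,y')|\lesssim\bigl(|y-y'|/(t+|x-y|)\bigr)^{\alpha}\,t(t+|x-y|)^{-(n+1)}$ for $|y-y'|\le(t+|x-y|)/2$; this is available because the relevant kernels come from $\partial_t^2\Gamma$ (resp.\ $\partial_t^2\Gamma^*$), which are solutions of $Lu=0$ (resp.\ $L^*u=0$) away from the pole and hence locally H\"older by De Giorgi--Nash (the smooth $P_t$ piece is trivially H\"older). With that extra decay-in-smoothness the annular gain improves from $2^{-j}$ to $2^{-j(1+\alpha)}$, the sum converges, and the standard Poincar\'e proof goes through; this is presumably what \cite{AAAHK}, Lemma 3.5 assumes. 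In your framework the same regularity hypothesis would give the missing $\|R_tQ_sh\|_2\lesssim(t/s)^{1+\delta}\|Q_sh\|_2$ for the range $s>t$. So: you diagnosed the difficulty correctly, but the cure requires a hypothesis that the statement of Lemma \ref{l2.15} omits, and your proposal does not (and cannot) avoid it.
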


The proof of the last lemma is a standard exercise in the use of Poincar\'e's inequality.  We omit the details, but see, e.g. \cite{AAAHK} (Lemma 3.5), for a more general result.

Finally, we shall use the following special case of the ``Fatou Theorem" of \cite{AAAHK}.
\begin{lemma}\label{l2.12}  \cite{AAAHK} (Corollary 4.41) Suppose that $L$ satisfies the hypotheses of Theorem
\ref{t1.3}, with $\epsilon$ sufficiently small, and that $Lu=0$ in $\mathbb{R}^{n+1}.$   
Suppose also that
\begin{equation*} \sup_{t>0} \|u(\cdot,t)\|_2 <\infty.
\end{equation*}
Then $u(\cdot,t)$ converges $n.t.$ and in $L^2$ as $t \to 0$.
\end{lemma}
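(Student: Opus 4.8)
The plan is to show that $u$ must coincide with the layer potential solution of the Dirichlet problem \eqref{D2} associated with some $L^2$ datum, and then to read off the claimed boundary convergence from Theorem \ref{t2.6}. Set $M\equiv\sup_{t>0}\|u(\cdot,t)\|_{2}<\infty$. Since bounded sets in $L^2(\mathbb{R}^n)$ are weakly precompact, first I would choose a sequence $t_j\downarrow 0$ along which $u(\cdot,t_j)\rightharpoonup f$ weakly in $L^2(\mathbb{R}^n)$, for some $f\in L^2(\mathbb{R}^n)$. The point will be to prove that $u$ is \emph{the} solution of \eqref{D2} with data $f$.

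The main step is a semigroup identity. I would first record that $\rho\mapsto u(\cdot,\rho)$ is continuous from $(0,\infty)$ into $L^2(\mathbb{R}^n)$: since a solution lies in $W^{1,2}_{\loc}$, for fixed $s>0$ and $0<t<s$ the fundamental theorem of calculus along vertical lines and the Cauchy--Schwarz inequality give $\|u(\cdot,s+t)-u(\cdot,s)\|_2^{2}\le t\int_{s/2}^{2s}\|\partial_t u(\cdot,\rho)\|_2^{2}\,d\rho\le t\,\|\nabla u\|^2_{L^2(\mathbb{R}^{n}\times(s/2,2s))}$, and summing the interior Caccioppoli inequality over a bounded-overlap cover of $\mathbb{R}^{n}\times(s/2,2s)$ by balls of radius $\sim s$ bounds the right side by $C(s)\,M^{2}t\to0$. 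Consequently, for each $s>0$ the vertical translate $u_s(x,t)\equiv u(x,t+s)$ solves $Lu_s=0$ in $\mathbb{R}^{n+1}_{+}$, obeys $\sup_{t>0}\|u_s(\cdot,t)\|_2\le M$, tends to $u(\cdot,s)$ in $L^2$ as $t\to0$ by the continuity just established, and tends to $u(\cdot,s)$ non-tangentially as $t\to0$ because $u$ is continuous in a full neighborhood of the interior slice $\{t=s\}$ by De Giorgi--Nash. Thus $u_s$ meets \emph{all} the hypotheses of \eqref{D2} with data $u(\cdot,s)$, so by the uniqueness asserted in Theorem \ref{t2.6} it \emph{is} the solution. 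Writing $P_{\sigma}$ for the (linear) operator sending a datum $g\in L^2$ to the height-$\sigma$ slice of the \eqref{D2}-solution with data $g$ --- which is bounded on $L^2(\mathbb{R}^n)$, uniformly in $\sigma$, since that solution is produced by layer potentials and one has the bounds \eqref{eq2.squarefunction} together with invertibility of $\tfrac12 I+K$ --- this identifies
$$u(\cdot,\sigma+s)=P_{\sigma}\big(u(\cdot,s)\big),\qquad \sigma,\,s>0.$$

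Finally I would pass to the limit. Fix $\sigma>0$ and let $s=t_j\to 0$ in the displayed identity. The left side converges to $u(\cdot,\sigma)$ strongly in $L^2$ by the $L^2$-continuity of slices, while the right side $P_\sigma(u(\cdot,t_j))$ converges weakly in $L^2$ to $P_\sigma f$, because $P_\sigma$ is bounded and linear, hence weakly sequentially continuous. Uniqueness of weak limits forces $u(\cdot,\sigma)=P_\sigma f$ for every $\sigma>0$; that is, $u$ agrees slice by slice, hence everywhere, with the \eqref{D2}-solution having boundary data $f$. By Theorem \ref{t2.6} that solution converges to $f$ non-tangentially and in $L^2(\mathbb{R}^n)$ as $t\to0$, and therefore so does $u$, which is the assertion. (The same computation shows a posteriori that every weak subsequential limit of $u(\cdot,t)$ as $t\to0$ equals $f$.)

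The genuinely substantive content is entirely imported from Theorem \ref{t2.6}: the solvability and uniqueness of \eqref{D2}, the square function estimates \eqref{eq2.squarefunction}, and the invertibility of $\tfrac12 I+K$. Granting these, the only real work is the semigroup identity, where one must check that a vertical translate of $u$ satisfies \emph{every} requirement of the well-posed problem \eqref{D2} --- in particular that it attains its boundary values in $L^2$, which is exactly the purpose of the Caccioppoli-based continuity of $\rho\mapsto u(\cdot,\rho)$. This is also the step I expect to be the principal (albeit mild) obstacle.
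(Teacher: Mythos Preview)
The paper does not supply its own proof of this lemma; it is simply cited from \cite{AAAHK} (Corollary 4.41). Your argument is a correct and standard way to deduce such a Fatou-type statement once one grants the full package of Theorem~\ref{t2.6}: interior $L^2$-continuity of slices via Caccioppoli, the semigroup identity $u(\cdot,\sigma+s)=P_\sigma u(\cdot,s)$ from uniqueness in \eqref{D2}, and a weak-limit passage exploiting the uniform $L^2$-boundedness of $P_\sigma$.

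One comment on logical dependence rather than mathematics: your argument rests on the \emph{uniqueness} clause of Theorem~\ref{t2.6}. In the source \cite{AAAHK}, the Fatou theorem and uniqueness for \eqref{D2} are developed together, and one should be aware that uniqueness in the class ``$\sup_t\|u(\cdot,t)\|_2<\infty$'' is not obviously available before one knows that such solutions have $L^2$ boundary traces. Within the present paper that is not an issue, since Theorem~\ref{t2.6} is imported as a black box; but if you were reconstructing \cite{AAAHK} from scratch you would need to verify that uniqueness there is established independently of (or simultaneously with) the Fatou statement, so that your reduction is not circular. A minor correction: the relevant operator to invert for the upper half-space in this paper's convention is $-\tfrac12 I+K$ (cf.\ \eqref{eq4.represent}), not $\tfrac12 I+K$; both are bijective by Theorem~\ref{t2.6}, so this does not affect your proof.
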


\section{Proof of Theorem \ref{t1.3} \label{s3}}
The proof of the theorem will use the following
\begin{lemma}\label{l3.0}Suppose that $L$ satisfies the hypotheses of Theorem \ref{t1.3},
with $\epsilon$ sufficiently small.   Suppose also that $Lu=0$ in $\mathbb{R}^{n+1}$,
with $\sup_{t>0}\|u(\cdot,t)\|_2 < \infty.$  Then 
$$\||t\nabla u\|| \leq C \sup_{t>0}\|u(\cdot,t)\|_2.$$
\end{lemma}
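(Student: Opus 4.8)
The plan is to represent $u$ by a double layer potential, using the Fatou theorem together with the uniqueness of the $L^2$ Dirichlet problem, and then to extract the square function bound from the estimates \eqref{eq2.squarefunction}. Since $\sup_{t>0}\|u(\cdot,t)\|_2<\infty$, Lemma \ref{l2.12} produces $f\in L^2(\mathbb{R}^n)$ with $u(\cdot,t)\to f$, $n.t.$ and in $L^2$, as $t\to0$, and $\|f\|_2\leq\sup_{t>0}\|u(\cdot,t)\|_2$. By Theorem \ref{t2.6}, $\tfrac12 I+K$ is invertible on $L^2$; set $g\equiv(\tfrac12 I+K)^{-1}f$, so that $\|g\|_2\leq C\|f\|_2$. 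Then $\mathcal{D}_tg$ solves $Lv=0$ in $\mathbb{R}^{n+1}_+$, has $\sup_{t>0}\|\mathcal{D}_tg(\cdot,t)\|_2\leq C\|g\|_2<\infty$, and $\mathcal{D}_tg\to(\tfrac12 I+K)g=f$, $n.t.$ and in $L^2$ (all by Theorem \ref{t2.6}); hence $\mathcal{D}_tg$ solves \eqref{D2} with data $f$, and by the uniqueness asserted there, $u=\mathcal{D}_tg$. It therefore suffices to prove $\||t\nabla\mathcal{D}_tg\||\leq C\|g\|_2$.

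Next I would unwind the definition of $\mathcal{D}_t$. Using $t$-independence in the form $\Gamma(x,t,y,s)=\Gamma(x,t-s,y,0)$, the relation $\overline{\Gamma^*(y,s,x,t)}=\Gamma(x,t,y,s)$, the identity $\overline{A^*_{n+1,j}}=A_{j,n+1}$, and an integration by parts in $y$ (to recast the resulting $y$-derivatives of $\Gamma$ as the operators $S_tD_j$), one arrives at the identity
\begin{equation*}
\mathcal{D}_tg=\sum_{j=1}^{n}S_tD_j\bigl(A_{j,n+1}\,g\bigr)+\partial_tS_t\bigl(A_{n+1,n+1}\,g\bigr),
\end{equation*}
which, as a sanity check, reduces to $\mathcal{D}_tg=\partial_tS_tg$ when $L=-\Delta$. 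Apply $\nabla=(\nabla_x,\partial_t)$. Since $A\in L^\infty$, both $A_{j,n+1}g$ and $A_{n+1,n+1}g$ have $L^2$ norm $\leq\Lambda\|g\|_2$, and every term except the tangential gradients $\nabla_xS_tD_j(A_{j,n+1}g)$ is controlled directly by \eqref{eq2.squarefunction}: the quantity $\||t\,\partial_tS_tD_j\psi\||$ appears there explicitly, while $\partial_t^2S_t\psi$ and $\nabla_x\partial_tS_t\psi$ are each components of $\nabla\partial_tS_t\psi$ and so are dominated by $\||t\nabla\partial_tS_t\psi\||\leq C\|\psi\|_2$. In particular this already yields the full normal part $\||t\,\partial_t\mathcal{D}_tg\||\leq C\|g\|_2$.

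The remaining ingredient, and what I expect to be the main obstacle, is the tangential square function estimate
\begin{equation*}
\||\,t\,\nabla_xS_tD_j\psi\,\||\leq C\|\psi\|_2,\qquad\psi\in L^2(\mathbb{R}^n),\ 1\leq j\leq n,
\end{equation*}
which is not among the bounds in \eqref{eq2.squarefunction} (it carries two tangential derivatives on $S_t$). I would reduce it to available facts as follows. By the pointwise second-derivative bounds for $\Gamma$, the family $\{t\nabla_xS_tD_j\}_{t>0}$ has a kernel satisfying the estimate \eqref{eq2.10a} of Lemma \ref{l2.9a}, and it annihilates constants since $S_tD_j1=0$; combining Lemma \ref{l2.15} with the quasi-orthogonality (off-diagonal / functional-calculus) bounds underlying the proof of \eqref{eq2.squarefunction} in \cite{AAAHK} then gives, by a $T(1)$-type argument, the claimed estimate. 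Alternatively, and more in the spirit of treating \eqref{eq2.squarefunction} as a black box, one may seek an operator identity $S_tD_j=-\partial_tS_t\,\mathcal{R}_j+E_{t,j}$, where $\mathcal{R}_j$ is bounded on $L^2$ (it is precisely the $j$-th Riesz transform when $L=-\Delta$) and the remainder $E_{t,j}$ is small enough, by virtue of the perturbative hypothesis \eqref{eq1.small}, that $\||t\nabla_xE_{t,j}\psi\||\leq C\|\psi\|_2$; then $\||t\nabla_xS_tD_j\psi\||\leq\||t\nabla_x\partial_tS_t(\mathcal{R}_j\psi)\||+\||t\nabla_xE_{t,j}\psi\||\leq C\|\psi\|_2$ by \eqref{eq2.squarefunction}. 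Producing such an identity, with the requisite control on $E_{t,j}$, is where the real work lies. Granting the tangential estimate, assembling the pieces gives $\||t\nabla\mathcal{D}_tg\||\leq C\|g\|_2\leq C\|f\|_2\leq C\sup_{t>0}\|u(\cdot,t)\|_2$, as desired.
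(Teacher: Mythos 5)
Your setup agrees with the paper's: both invoke the Fatou theorem, the uniqueness for \eqref{D2}, and the representation $u(\cdot,t)=\mathcal{D}_t(\pm\tfrac12 I+K)^{-1}f$, and both use (explicitly in your case, implicitly in the paper's) the algebraic identity $\mathcal{D}_tg=\sum_j S_tD_j(A_{j,n+1}g)+\partial_tS_t(A_{n+1,n+1}g)$. The divergence comes after that, and there you have a genuine gap, which you yourself flag: your plan requires the tangential--tangential square function bound
$\||t\nabla_xS_tD_j\psi\||\leq C\|\psi\|_2$, which is not among the estimates in \eqref{eq2.squarefunction}, and neither of your two proposals for producing it (a $T(1)$-type argument redoing the quasi-orthogonality analysis of \cite{AAAHK}, or an identity $S_tD_j=-\partial_tS_t\mathcal{R}_j+E_{t,j}$ with suitable control on $E_{t,j}$) is carried out or obviously available.

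The paper avoids needing this estimate entirely. Its key move, which is absent from your argument, is to integrate by parts in $t$ \emph{before} ever touching the layer-potential representation. Writing $t=\tfrac12\partial_t(t^2)$ and integrating by parts converts $\iint|\nabla u|^2t$ into $-\Re\iint\langle\partial_t\nabla u,\nabla u\rangle t^2$ plus boundary terms (handled by Caccioppoli after averaging in $\delta$). Then Cauchy's inequality with a small parameter $\varepsilon$ splits the main term into $\tfrac{\varepsilon}{2}\iint|\nabla u|^2t$ (absorbed back into the left side) and $\tfrac{1}{2\varepsilon}\iint|\nabla\partial_t u|^2t^3$; Caccioppoli on Whitney boxes reduces the latter to $C\iint|\partial_t u|^2t$. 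Thus the whole problem is reduced to bounding $\||t\partial_t u\||$, i.e.\ $\||t\partial_t\mathcal{D}_tg\||$. And \emph{that} involves only $\partial_tS_tD_j$ and $\partial_t^2S_t$, both of which are covered by \eqref{eq2.squarefunction}. In short: you are trying to estimate the full gradient $\nabla\mathcal{D}_tg$ head-on and so you need an estimate the given toolkit doesn't provide; the paper first trades $\nabla$ for $\partial_t$ via integration by parts and absorption, after which only the estimates that are actually stated are needed. Incorporating that integration-by-parts step would close your gap and make the rest of your argument go through as written.
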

\begin{proof}
It is enough to show that for each fixed $\eta \in (0,10^{-10}), $
$$ \fint_\eta^{2\eta}\int_\delta^{1/\delta}\!\!\int_{\mathbb{R}^n}
|\nabla u|^2 t dx dt d\delta\leq C\sup_{t>0}\|u(\cdot,t)\|_2.$$ 
Integrating by parts in $t$ on the left side of the last inequality, we obtain 
\begin{equation}\label{eq3.0b}-\Re e \fint_\eta^{2\eta}\int_\delta^{1/\delta}\langle
\partial_t \nabla u,\nabla u\rangle t^2 dt d\delta + \text { boundary},\end{equation}
where the boundary terms are dominated by
\begin{equation}\label{eq3.0a}\sup_{r>0} \fint_r^{2r}\!\!\int_{\mathbb{R}^n}r^2|\nabla u(x,t)|^2dxdt \leq C\sup_{t>0}\|u(\cdot,t)\|^2_2,\end{equation}
as desired, and in the last step we have split $\mathbb{R}^n$ into cubes of side length $\approx r$ and used Caccioppoli's inequality.
By Cauchy's inequality, the main term in \eqref{eq3.0b} is no larger than 
$$\frac{\varepsilon}{2} \fint_\eta^{2\eta}\int_\delta^{1/\delta}\!\!\int_{\mathbb{R}^n}
|\nabla u|^2 t dx dt d\delta +  \frac{1}{2\varepsilon}\fint_\eta^{2\eta}
\int_\delta^{1/\delta}\!\!\int_{\mathbb{R}^n}
|\nabla \partial_t u|^2 t^3 dx dt d\delta \equiv I + II,$$
where $\varepsilon >0$ is at our disposal.  Choosing $\varepsilon$ small, we may hide term $I$.  Having fixed  $\varepsilon$, and applying Caccioppoli's inequality in Whitney boxes, we obtain that
$$II \leq C \iint_{\mathbb{R}^{n+1}_+} |\partial_t u|^2 dx \,t dt.$$
By the Fatou Theorem of \cite{AAAHK}, Section 4, $u$ converges in $L^2(\mathbb{R}^n)$ to some
$f $, with $$\|f\|_2 \leq \sup_{t>0}\|u(\cdot,t)\|_2.$$ Thus,  by Theorem \ref{t2.6}, 
$u(\cdot,t) = \mathcal{D}_t \left(-\frac{1}{2}I + K\right)^{-1}f.$
By \eqref{eq2.squarefunction}, the bijectivity of $\left(-\frac{1}{2}I+K\right)$ 
and the definition of $\mathcal{D}_t$, we obtain that
$$\iint_{\mathbb{R}^{n+1}_+} |\partial_t u|^2 dx \,t dt \leq C\|f\|_2\leq C\sup_{t>0}\|u(\cdot,t)\|_2.$$
\end{proof}

\begin{proof}[Proof of Theorem \ref{t1.3}]By the previous lemma, it is enough to establish the bound
\begin{multline}\label{eq3.1}
\sup_{0<\eta<10^{-10}}\fint_\eta^{2\eta}
\left|\int_\delta^{1/\delta}\!\!\int_{\mathbb{R}^n} \nabla u\cdot \overline{{\bf v}}\, dx dt \right|d\delta\\
\leq C
\left(\||t \nabla u\||+\sup_{t>0}
\|u(\cdot,t)\|_{L^2(\mathbb{R}^n)}\right)\left( \||t \nabla {\bf v}\|| +\sup_{t>0}
\|{\bf v}(\cdot,t)\|_{L^2(\mathbb{R}^n)}\right).
\end{multline}
We may suppose that the right hand side of
\eqref{eq3.1} is finite, otherwise there is nothing to prove.
On the left hand side of \eqref{eq3.1}, for each fixed $\eta$, we integrate by parts in $t$
to obtain the bound
\begin{equation}\label{eq3.2}\left|\fint_\eta^{2\eta}\! \!\int_\delta^{1/\delta}\! \!\int_{\mathbb{R}^n} \nabla 
u\cdot \overline{\partial_t{\bf v}} \,dx\, tdt d\delta\right|
+\left|\fint_\eta^{2\eta}\! \!\int_\delta^{1/\delta}\! \!\int_{\mathbb{R}^n} \nabla
\partial_t u\cdot  \overline{{\bf v}} \,dx\,t dtd\delta \right|
+\text{ boundary},\end{equation}
where the boundary terms are dominated by
\begin{multline*}C\left(\sup_{r>0} \fint_r^{2r}\!\!\int_{\mathbb{R}^n}r^2|\nabla u(x,t)|^2dxdt
\right)^{1/2}\left( \sup_{t>0}\|{\bf v}(\cdot,t)\|_2\right)\\\leq\,\, C\left( \sup_{t>0}\|u(\cdot,t)\|_2\right)
\left( \sup_{t>0}\|{\bf v}(\cdot,t)\|_2\right),\end{multline*}
and we have used \eqref{eq3.0a} in the last step.   Moreover, by Cauchy-Schwarz, the
first term in \eqref{eq3.2} is no larger than
$\||t\nabla u\|| \,\, \||t\nabla {\bf v}\||.$

It therefore remains to treat the middle term in \eqref{eq3.2}.  To this end, we write
$\nabla = \nabla_x + \partial_t e_{n+1},$ and ${\bf v} = \text{v}_\| + \text{v}_{n+1}e_{n+1},$
where $v_{n+1}\equiv {\bf v}\cdot e_{n+1}.$
Now, \begin{multline*}\left|\fint_\eta^{2\eta}\! \!\int_\delta^{1/\delta}\! \!\int_{\mathbb{R}^n} \nabla_x
\partial_t u\cdot \overline{\text{v}_\|} \,dx\,t dtd\delta \right| \\=\,\,
\left|\fint_\eta^{2\eta}\! \!\int_\delta^{1/\delta}\! \!\int_{\mathbb{R}^n} 
\partial_t u\,\dv_x \!\overline{\text{v}_\|}  \,dx\,t dtd\delta \right|
\,\,\leq\,\, C\||t\nabla u\|| \,\, \||t\nabla {\bf v}\||,\end{multline*}
as desired.  Thus, it is enough to consider
\begin{multline}\label{eq3.3}\left|\fint_\eta^{2\eta}\! \!\int_\delta^{1/\delta}\! \!\int_{\mathbb{R}^n} 
\partial_t^2 u\, \overline{\text{v}_{n+1}}dx\,t dtd\delta \right|\,\,\leq\,\,
\frac{1}{2}\left|\fint_\eta^{2\eta}\! \!\int_\delta^{1/\delta}\! \!\int_{\mathbb{R}^n} 
\partial_t^3 u\, \overline{\text{v}_{n+1}}dx\,t^2 dtd\delta\right|\\ +\,\,\frac{1}{2}\left| 
\fint_\eta^{2\eta}\! \!\int_\delta^{1/\delta}\! \!\int_{\mathbb{R}^n} 
\partial_t^2 u\, \overline{\partial_t\text{v}_{n+1}}dx\,t^2 dtd\delta\right| \,\,+\,\,|\mathcal{B}|\\\equiv |I| + |II| + 
|\mathcal{B}|,\end{multline} where we have again integrated by parts in $t$, and $\mathcal{B}$ denotes boundary terms which satisfy
\begin{multline*}|\mathcal{B}|  \leq
C\left(\sup_{r>0} \fint_r^{2r}\!\!\int_{\mathbb{R}^n}r^4|\partial_t^2 u(x,t)|^2dxdt
\right)^{1/2}\left( \sup_{t>0}\|{\bf v}(\cdot,t)\|_2\right)\\\leq C\left( \sup_{t>0}\|u(\cdot,t)\|_2\right)
\left( \sup_{t>0}\|{\bf v}(\cdot,t)\|_2\right),\end{multline*}
by a double application of Caccioppoli's inequality.
Moreover,
$$|II| \leq C \,\||t^2 \partial_t^2u\|| \, \,\||t\partial_t {\bf v}\|| \leq
C \,\||t \partial_t u\|| \, \,\||t\partial_t {\bf v}\||,$$ where we have used Caccioppoli in Whitney boxes to
bound the first factor.  Turning to the main term, we have that
\begin{equation}\label{eq3.4}|I| = C\left|\fint_\eta^{2\eta}
\! \!\int_{\delta/2}^{1/(2\delta)}\! \!\int_{\mathbb{R}^n} 
\partial_t^3 u(x,2t)\, \overline{\text{v}_{n+1}(x,2t)}\,dx\,t^2 dtd\delta \right|,\end{equation}
where we have made the change of variable $t \to 2t$.
For $t$ momentarily fixed, set $$g_t(x) \equiv \partial_t u(x,t).$$
By Theorem \ref{t2.6} (i.e., the result of \cite{AAAHK}), we have that
$$\tilde{u}_t(\cdot,s) \equiv \mathcal{D}_s \left(-\frac{1}{2}I + K \right)^{-1} g_t$$
is the unique solution of  (D2) with data $g_t$.  Hence, by $t$-independence,
$$\tilde{u}_t(\cdot,s) = \partial_tu(\cdot,t+s).$$ Setting $s=t$, we therefore obtain that
$$(D_{n+1}^3 u)(\cdot,2t) =\left( \partial_t^2\mathcal{D}_t\right)\left(-\frac{1}{2}I + K \right)^{-1} g_t.$$
We observe that by \eqref{2.1a}, \eqref{eq1.5} and \eqref{eq2.2a},
\begin{equation*} ad\!j\left(\partial_t^2\mathcal{D}_{t}\right)=
\partial_{\nu^*}\partial_t^2S^*_{-t}.\end{equation*}
Consequently,  \eqref{eq3.4} becomes
$$|I| = C\left|\fint_\eta^{2\eta}\! \!\int_{\delta/2}^{1/(2\delta)}\! \!\int_{\mathbb{R}^n} 
\left(-\frac{1}{2}I + K\right)^{-1}\partial_t u(\cdot,t)\, 
\overline{\partial_{\nu^*}D_{n+1}^2S^*_{-t}\text{v}_{n+1}(\cdot,2t)}\,dx\,t^2 dtd\delta \right|,$$
so by Cauchy-Schwarz and Theorem \ref{t2.6}, it suffices to prove
\begin{equation}\label{eq3.5}\||t^2\nabla D_{n+1}^2 S^*_{-t}
{\bf v}(\cdot, 2t)\||\leq C\left( \||t\nabla {\bf v}\|| + \|N_*{\bf v}\|_2\right).\end{equation}
The left hand side of \eqref{eq3.5} equals
\begin{multline}\label{eq3.6}\left(\sum_{k=-\infty}^\infty \int_{2^k}^{2^{k+1}}
\!\!\int_{\mathbb{R}^n} |t^2\nabla D_{n+1}^2 S_{-t}^* {\bf v}(\cdot,2t)
|^2 \frac{dx\, dt}{t}\right)^{1/2}\\\leq\,\,\left(\sum_{k=-\infty}^\infty \int_{2^k}^{2^{k+1}}
\!\!\int_{\mathbb{R}^n} |t^2\nabla D_{n+1}^2 S_{-t}^*\left( {\bf v}(\cdot,2t)
-{\bf v}(\cdot,2t_k)\right)|^2 \frac{dx\, dt}{t}\right)^{1/2}\\
+\,\,\left(\sum_{k=-\infty}^\infty \int_{2^k}^{2^{k+1}}
\!\!\int_{\mathbb{R}^n} |t^2
\nabla D_{n+1}^2 S_{-t}^*{\bf v}(\cdot,2t_k)|^2 \frac{dx\, dt}{t}\right)^{1/2}\equiv
\,\,III+IV\end{multline}
where $t_k = 2^{k-1}$.
We consider term $IV$ first.  Dividing $\mathbb{R}^n$ into cubes of side length
$2^k$, and using Caccioppoli's inequality, we deduce that
\begin{multline*}IV\, \leq \,C \left(\sum_{k=-\infty}^\infty \int_{2^{k-1}}^{2^{k+2}}
\!\!\int_{\mathbb{R}^n} |t D_{n+1}^2 S_{-t}^*{\bf v}(\cdot,2t_k)|^2 \frac{dx\, dt}{t}\right)^{1/2}\\\leq\, 
C\left(\sum_{k=-\infty}^\infty \int_{2^{k-1}}^{2^{k+2}}
\!\!\int_{\mathbb{R}^n} |t D_{n+1}^2 S_{-t}^*\left( {\bf v}(\cdot,2t_k)
-{\bf v}(\cdot,2t)\right)|^2 \frac{dx\, dt}{t}\right)^{1/2}\\
+\,\,\||\left(tD_{n+1}^2S_{-t}^*1\right)\left(P_t {\bf v}(\cdot,2t)\right)\||\,\,+
\,\,||R_t {\bf v}(\cdot,2t)\||\,\,\equiv\,\, IV_1 + IV_2 + IV_3,
\end{multline*}
where $$R_t \equiv tD_{n+1}^2S_{-t}^*- \left(tD_{n+1}^2S_{-t}^*1\right)P_t,$$
and $P_t$ is a nice approximate identity with a smooth, compactly supported kernel.
By Lemma \ref{l2.15}, 
$$IV_3 \leq \||t\nabla {\bf v}\||.$$
By \eqref{eq2.squarefunction}, Lemma \ref{l2.9a}, and a well known argument of Fefferman and Stein
\cite{FS}, we have that
$|t \partial_t^2 S^*_{-t} 1|^2\frac{dx dt}{t}$ is a Carleson measure, whence
$$IV_2 \leq C \|N_* {\bf v}\|_2.$$
By Lemma \ref{l2.9a}, the operator $f\to tD_{n+1}^2 S_{-t}^* f$ 
is bounded on $L^2(\mathbb{R}^n)$, so that
\begin{multline*}IV_1 \leq C\left(\sum_{k=-\infty}^\infty \int_{2^{k-1}}^{2^{k+2}}
\!\!\int_{\mathbb{R}^n}\left|\frac{1}{\sqrt{t}}\int_{2t_k}^{2t}\partial_s{\bf v}(x,s)ds\right|^2 dx\, dt\right)^{1/2}
\\\leq \, C \left(\sum_{k=-\infty}^\infty 
\iint_{\mathbb{R}^{n+1}}\left|\fint_{2t_k}^{2t}
1_{\{2^{k}\leq s \leq 2^{k+3}\}}\sqrt{s}\partial_s{\bf v}(x,s)ds\right|^2 dx\, dt\right)^{1/2}\leq C
\||t \partial_t {\bf v}\||,
\end{multline*}
where in the last step we have used the boundedness of the Hardy-Littlewood Maximal function.

Finally, we consider term $III$ in \eqref{eq3.6}.  By Lemma \ref{l2.11a}, 
$III$ may be handled like $IV_1$ above.  We omit the details. \end{proof}

\section{The domain of the generator of the Poisson semigroup \label{s4}}
In this section we generalize to our setting a result of \cite{D} concerning the domain of the 
generator of the Poisson semigroup.
We continue to suppose that the hypotheses of Theorem \ref{t1.3} hold.
By Theorem \ref{t2.6}, if $\epsilon \leq \epsilon_0$ is sufficiently small,
then the Dirichlet problem (D2) has a unique solution.  Consequently, the solution operator 
$f\to\mathcal{P}(t) f \equiv u(\cdot,t),$
where $u$ solves (D2) with data $f$, satisfies
\begin{equation}\label{eq4.1}\sup_{t>0}\|\mathcal{P}(t)\|_{2\to2} \leq C,\quad
\lim_{t\to0}\|\mathcal{P}(t) f - f\|_2 =0,
\end{equation}
and 
\begin{equation}\label{eq4.2}\mathcal{P}(t+s) =\mathcal{P}(t)\mathcal{P}(s), 
\end{equation}
where the last identity uses also $t$-independence of the coefficients. 
Standard semigroup theory therefore implies that the semigroup
$\{\mathcal{P}(t)\}$ has a densely defined infinitesimal generator 
on $L^2(\mathbb{R}^n)$, which we denote by $\mathcal{A}$.
We will show that the domain $D(\mathcal{A})$ of this generator is the Sobolev space
$L^2_1 \equiv L^2 \cap\dot{L}^2_1$.   More precisely, we have the following
\begin{theorem}\label{t4.3}Suppose that $L$ is as above.  Then $D(\mathcal{A}) = L^2_1,$
and
$$\|\mathcal{A}f\|_2 \approx \|\nabla_xf\|_2.$$
\end{theorem}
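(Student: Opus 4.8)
The plan is to pin down the generator explicitly: I will show that, for $\epsilon$ small, $\mathcal{A}=\mathcal{T}\circ S^{-1}$ on $L^2_1$, where $S=S_0$ is the boundary single layer potential — an isomorphism $L^2(\mathbb{R}^n)\to\dot{L}^2_1(\mathbb{R}^n)$ by \cite{AAAHK} — and $\mathcal{T}g:=\lim_{t\to0^+}t^{-1}(S_t g-S_0 g)$ (the limit taken in $L^2$) is an isomorphism of $L^2(\mathbb{R}^n)$. For the representation step: fix $f\in L^2_1=L^2\cap\dot{L}^2_1$ and put $g:=S^{-1}f\in L^2$. Then $L(S_t g)=0$ in $\mathbb{R}^{n+1}_+$, and $\sup_{t>0}\|S_t g\|_2<\infty$ — this uses $S_0 g=f\in L^2$ together with the single-layer bounds of \cite{AAAHK} — so by Lemma \ref{l2.12} and the continuity of the single layer up to the boundary, $S_t g\to S_0 g=f$ nontangentially and in $L^2$ as $t\to0^+$. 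Thus $S_t g$ solves (D2) with data $f$, and by the uniqueness in Theorem \ref{t2.6}, $\mathcal{P}(t)f=S_t g$. Writing $\partial_t=\tfrac{1}{A_{n+1,n+1}}\big(-\partial_\nu-\sum_{j\le n}A_{n+1,j}\partial_{x_j}\big)$ and using the jump relations for the conormal derivative of $S$ and the tangential estimates of \cite{AAAHK}, the limit defining $\mathcal{T}$ exists in $L^2$ on all of $L^2$; hence $f\in D(\mathcal{A})$ with $\mathcal{A}f=\partial_t(S_t g)|_{t=0^+}=\mathcal{T}S^{-1}f$. In particular $L^2_1\subseteq D(\mathcal{A})$ and $\mathcal{A}=\mathcal{T}S^{-1}$ there.

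That $\mathcal{T}$ is an isomorphism of $L^2$ is, for the constant-coefficient model $L_0=-\dv A_0\nabla$, an explicit Fourier computation: the partial Fourier transform in $x$ of the fundamental solution is $\widehat{\Gamma_{A_0}(\cdot,t,0,0)}(\xi)=c(\xi)e^{-t\omega(\xi)}$ for $t>0$, where $\omega$ is the root of the characteristic polynomial of $L_0$ with $\R\,\omega>0$ (homogeneous of degree $1$, $|\omega(\xi)|\approx|\xi|$) and $c$ is homogeneous of degree $-1$ with $|c(\xi)|\approx|\xi|^{-1}$; hence $\mathcal{T}^{A_0}$ is the Fourier multiplier with symbol $-\omega(\xi)c(\xi)$, which is homogeneous of degree $0$, smooth and nonvanishing on $\mathbb{R}^n\setminus\{0\}$, so $\mathcal{T}^{A_0}$ is an isomorphism of $L^2$ with constants depending only on $n,\lambda,\Lambda$. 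By the stability of the single layer and of its boundary $t$-derivative under the perturbation $\|A-A_0\|_\infty\leq\epsilon$ — here one invokes \cite{A} and \cite{AAAHK} — one has $\|\mathcal{T}-\mathcal{T}^{A_0}\|_{L^2\to L^2}\leq C\epsilon$, so for $\epsilon\leq\epsilon_0$ small, $\mathcal{T}=\mathcal{T}^{A_0}+O_{L^2\to L^2}(\epsilon)$ is invertible on $L^2$ with $\|\mathcal{T}^{\pm1}\|_{L^2\to L^2}\leq C(n,\lambda,\Lambda)$.

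Granting these two steps, the theorem follows. First, for $f\in L^2_1$, $\|\mathcal{A}f\|_2=\|\mathcal{T}S^{-1}f\|_2\leq\|\mathcal{T}\|\,\|S^{-1}f\|_2\leq C\|\nabla_x f\|_2$, since $S^{-1}\colon\dot{L}^2_1\to L^2$ is bounded. Conversely, let $f\in D(\mathcal{A})$ and write $u(\cdot,t):=\mathcal{P}(t)f$. For a.e.\ $t>0$, $u(\cdot,t)\in L^2_1$ (Caccioppoli's inequality together with $\sup_t\|u(\cdot,t)\|_2<\infty$), so $g_t:=S^{-1}u(\cdot,t)\in L^2$; and since $\mathcal{P}(s)u(\cdot,t)=u(\cdot,t+s)$, Step 1 applied to $u(\cdot,t)$ gives $\mathcal{T}g_t=\mathcal{A}u(\cdot,t)=\partial_t u(\cdot,t)=\mathcal{P}(t)\mathcal{A}f$, whence $\|g_t\|_2=\|\mathcal{T}^{-1}\mathcal{P}(t)\mathcal{A}f\|_2\leq C\|\mathcal{A}f\|_2$. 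Thus $\{g_t\}$ is bounded in $L^2$; choosing $t_k\to0$ with $g_{t_k}\rightharpoonup g$ weakly in $L^2$ (so $\|g\|_2\leq C\|\mathcal{A}f\|_2$) and using that $S\colon L^2\to\dot{L}^2_1$ is bounded, hence weakly continuous, we get $\nabla_x u(\cdot,t_k)=\nabla_x S g_{t_k}\rightharpoonup\nabla_x S g$ in $L^2$; but $u(\cdot,t_k)\to f$ in $L^2$ forces $\nabla_x u(\cdot,t_k)\to\nabla_x f$ in the sense of distributions, so $\nabla_x f=\nabla_x Sg\in L^2$ and $\|\nabla_x f\|_2\leq C\|g\|_2\leq C\|\mathcal{A}f\|_2$. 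Hence $f\in L^2_1$; combining with $L^2_1\subseteq D(\mathcal{A})$ we conclude $D(\mathcal{A})=L^2_1$ and $\|\mathcal{A}f\|_2\approx\|\nabla_x f\|_2$.

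The main obstacle is the invertibility of $\mathcal{T}$: the constant-coefficient identification is an elementary multiplier computation, but the quantitative perturbation bound $\|\mathcal{T}-\mathcal{T}^{A_0}\|_{L^2\to L^2}\lesssim\epsilon$ — which makes the Neumann series converge — rests on the analytic (in particular Lipschitz) dependence of the layer potentials on the coefficients, and this is exactly the technical input furnished by \cite{A} and \cite{AAAHK}. A secondary point requiring care is in Step 1: that $\sup_{t>0}\|S_t g\|_2<\infty$ follows from $S_0 g\in L^2$, and that $t^{-1}(S_t g-S_0 g)$ converges in $L^2$ as $t\to0^+$ for each $g\in L^2$; both are again consequences of the single-layer estimates of \cite{AAAHK} (cf.\ \eqref{eq2.squarefunction} and the Fatou-type result quoted as Lemma \ref{l2.12}).
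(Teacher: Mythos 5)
Your overall architecture is close to the paper's — both hinge on the representation $\mathcal{P}(t)f = S_t S_0^{-1}f$ (for $f \in L^2_1$) from Theorem \ref{t4.5}, and both reduce the theorem to invertibility on $L^2$ of the boundary $t$-derivative of the single layer; your $\mathcal{T}$ is exactly the paper's $-\tfrac12(A_{n+1,n+1})^{-1}I + \mathcal{T}_{n+1}$ from \eqref{eq4.7}. Your handling of the reverse inequality (bound $g_t := S_0^{-1}u(\cdot,t)$, pass to a weak subsequential limit) is a clean variant of the paper's device of comparing $\mathcal{P}(t)f$ with $S_t(\,-\tfrac12(A_{n+1,n+1})^{-1}I+\mathcal{T})^{-1}\mathcal{A}f$ via uniqueness in (D2).

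However, the crux — invertibility of $\mathcal{T}$ — has a genuine gap. You establish it by an explicit Fourier multiplier computation for a ``constant-coefficient model $L_0 = -\dv A_0\nabla$,'' and then perturb via a Neumann series. But the hypothesis \eqref{eq1.small} of Theorem \ref{t1.3} only requires $A_0 = A_0(x)$ to be real, symmetric, $L^\infty$, elliptic, and $t$-independent; it is \emph{not} constant in $x$. Your partial Fourier transform identity $\widehat{\Gamma_{A_0}(\cdot,t,0,0)}(\xi) = c(\xi)e^{-t\omega(\xi)}$, and hence the identification of $\mathcal{T}^{A_0}$ with a homogeneous degree-$0$ nonvanishing multiplier, is only valid when $A_0$ is constant. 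For variable real-symmetric $A_0$ there is no such multiplier model, and a Neumann series expansion around a constant matrix is unavailable because $A$ need not be uniformly close to any constant matrix. The paper gets invertibility of $\mathcal{T}^{A_0}$ by a completely different mechanism: the Rellich identity, which uses the real-symmetric structure to give the a priori two-sided bound $\|\nabla_x S^0_t f\|_2 \approx \|\partial_t S^0_t f\|_2$ uniformly in $t\geq 0$, together with the jump relation \eqref{eq4.7} and the fact that tangential derivatives do not jump. That a priori bound, propagated by the Lipschitz perturbation estimate \eqref{eq4.lip}, plus the method of continuity, is what yields invertibility. Replacing the Rellich step with a multiplier computation is not a repair of a detail; it is a different, insufficient argument. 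A secondary (but lesser) point: your claim that $\sup_{t>0}\|S_tg\|_2 < \infty$ for $g\in L^2$ with $S_0g\in L^2$ is asserted as a consequence of the single-layer bounds in \cite{AAAHK}, but \eqref{eq2.squarefunction} and \eqref{eq4.6} control $\nabla S_t$, not $S_t$ itself uniformly in $t$; the paper avoids this by using uniqueness in (R2) directly via \eqref{eq4.rep}.
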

We remark that this last theorem can be viewed as an extension of the Kato square root problem
(\cite{CMcM},\cite{HMc}, \cite{AHLT},\cite{HLMc} and \cite{AHLMcT}) to the case that the coefficient
matrix $A$ is a full $(n+1) \times (n+1)$ matrix.  Indeed, the Kato problem corresponds to the case
that the coefficient matrix has the special ``block" structure
\begin{equation}
\left[\begin{array}{c|c}
 & 0\\ B & \vdots\\
 & 0\\
\hline 0\cdots0 & 1\end{array}\right]\label{eq4.4}\end{equation}
 where $B=B(x)$ is a $n\times n$ matrix.  In the latter case
 the generator of the Poisson semigroup is $$-\sqrt{-\dv_x B \nabla_x},$$
 and the conclusion of Theorem \ref{t4.3} is the (now established) Kato conjecture.
 
 We also note that by Theorem \ref{t2.6}, we have the representation
\begin{equation}\label{eq4.represent}
\mathcal{P}(t) = \mathcal{D}_t\left(-\frac{1}{2}I + K\right)^{-1}.\end{equation}
 
 In order to prove the theorem we shall require the following result from
 \cite{AAAHK}.
 \begin{theorem}\cite{AAAHK}\label{t4.5}
 Suppose that L satisfies the hypotheses of Theorem \ref{t1.3}.  
 There exists a small constant $\epsilon_0= \epsilon_0(n,\lambda,\Lambda)$ such 
 that if $\epsilon$ in \eqref{eq1.small} satisfies $\epsilon \leq \epsilon_0$,
 then the single layer potential satisfies
 \begin{equation}\label{eq4.6} \sup_{t \in \mathbb{R}}\|\nabla S_t \|_{2\to 2} \leq C,
 \end{equation}
 and $S_0\equiv S_t|_{t=0} : L^2(\mathbb{R}^n) \to \dot{L}^2_1(\mathbb{R}^n)$ is a bijection.
 Moreover, there is a unique solution to the Regularity problem
 \begin{equation}
\tag{R2}\begin{cases} Lu=0\text{ in }\mathbb{R}_{+}^{n+1}\\ u(\cdot,t)\to 
f\in\dot{L}_{1}^{2}(\mathbb{R}^{n}) \, n.t.\\
\widetilde{N}_{\ast}(\nabla u)\in L^{2}(\mathbb{R}^{n}),\end{cases}\label{R2}\end{equation}
which has the representation \begin{equation} \label{eq4.rep}
u(\cdot,t) \equiv S_t \left(S_0^{-1} f\right),\end{equation}
and $\partial_t u(\cdot,t)$ converges $n.t.$  and in $L^2(\mathbb{R}^n)$ as $t\to 0$.
Finally, \begin{equation}\label{eq4.7}\left(\nabla S_{t}\right)|_{t=\pm s} f
\to  \mp\frac{1}{2}\cdot\frac{f(x)}{A_{n+1,n+1}(x)}e_{n+1}+\mathcal{T}f 
\end{equation}
weakly in $L^2(\mathbb{R}^n)$, where $\mathcal{T} : L^2(\mathbb{R}^n) 
\to L^2(\mathbb{R}^n,\mathbb{C}^{n+1}) $ 
 (see \cite{AAAHK}, Lemma 4.18 for a precise definition of $\mathcal{T}$). \end{theorem}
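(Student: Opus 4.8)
The plan is to deduce the whole statement from the first-order reformulation of $Lu=0$ that underlies \cite{AAAHK}, combined with a perturbation argument anchored at the real symmetric matrix $A_0$. First I would pass from a solution $u$ of $Lu=0$ to its conormal gradient $F=(\,-e_{n+1}\cdot A\nabla u,\ \nabla_x u\,)$; since $A$ is $t$-independent, $F$ satisfies a first-order system $\partial_t F + DB\,F = 0$ in $\mathbb{R}^{n+1}_+$, where $D$ is a fixed self-adjoint first-order operator on $\mathbb{R}^n$ and $B=B(x)$ is a bounded multiplication operator, accretive on the relevant subspace, built algebraically from $A$. The essential analytic input — a form of the Kato square root estimate — is that $DB$ satisfies quadratic (square function) estimates on $L^2(\mathbb{R}^n;\mathbb{C}^{n+1})$ and hence has a bounded holomorphic functional calculus; this holds here because $\|A-A_0\|_\infty$ is small, so $B$ is a small $L^\infty$ perturbation of the block-diagonal matrix attached to $A_0$, for which the estimate is classical.

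Granting the functional calculus, one gets the uniformly bounded analytic semigroup $e^{-t|DB|}$, the spectral projections $\mathbb{E}^\pm$ onto the spectral subspaces of $DB$ in the right and left half-planes, with $\|\mathbb{E}^+ + \mathbb{E}^-\|_{2\to2}\le C$, and square function control of $t\,\partial_t e^{-t|DB|}$. The solution of $Lu=0$ in $\mathbb{R}^{n+1}_+$ represented by $S_t g$ has conormal gradient of the form $e^{-t|DB|}\,\mathbb{E}^+\widehat g$ with $\widehat g$ linear in $g$; reading off the $\nabla_x$ and the $e_{n+1}$ components of this identity yields $\sup_{t}\|\nabla S_t\|_{2\to2}\le C$, that is \eqref{eq4.6}, and also the square function bound \eqref{eq2.squarefunction} already recorded in Theorem \ref{t2.6}. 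For the Regularity problem I would then set $u(\cdot,t)=S_t(S_0^{-1}f)$ for $f\in\dot{L}^2_1$; the bound $\|\widetilde{N}_*(\nabla u)\|_2\le C\|\nabla_x f\|_2$ follows from \eqref{eq4.6} and the square function estimate by the standard ``non-tangential maximal function is controlled by the square function'' argument (or directly from the semigroup bounds applied to Whitney averages), while the $n.t.$ and $L^2$ convergence of $\partial_t u$ is exactly the Fatou statement of Lemma \ref{l2.12} applied to $v=\partial_t u$, which again solves $Lv=0$ with $\sup_t\|v(\cdot,t)\|_2<\infty$. Uniqueness for (R2) reduces to uniqueness for (D2) in Theorem \ref{t2.6}: a solution of (R2) with $f=0$ has $\partial_t u$ a solution of $Lv=0$ with zero $L^2$ trace and $\sup_t\|v(\cdot,t)\|_2<\infty$, hence vanishes.

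The bijectivity of $S_0:L^2\to\dot{L}^2_1$ is where the perturbation structure is used most heavily. The map $A\mapsto S_0^A$ depends analytically on the $L^\infty$ coefficients, via the Cauchy-integral expansion of the functional calculus of $DB$; and for $A=A_0$ real symmetric the operator $S_0^{A_0}:L^2\to\dot{L}^2_1$ is an isomorphism — the classical solvability of the Dirichlet and Regularity problems for real symmetric $t$-independent equations, obtained from De Giorgi--Nash--Moser bounds and Rellich identities in the spirit of Jerison--Kenig and Kenig--Pipher. Since invertibility is an open condition and one has a priori bounds that are uniform for $\|A-A_0\|_\infty\le\epsilon_0$ (the functional calculus estimates), Sneiberg's lemma — equivalently, a method-of-continuity argument — propagates the isomorphism property to all such $A$; the same reasoning gives the isomorphisms $\pm\tfrac12 I+K$ of Theorem \ref{t2.6}. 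Finally, the jump relation \eqref{eq4.7} is obtained by computing $\lim_{s\to0^\pm}(\nabla S_t)|_{t=s}f$: the $e_{n+1}$-component carries a jump of size $\mp\tfrac12 A_{n+1,n+1}^{-1}f$, exactly as in the classical jump formula for the normal derivative of a single layer, while the rest converges weakly in $L^2$ to a principal-value operator $\mathcal{T}f$; the kernel bounds of the type in Lemma \ref{l2.9a} and the $L^2$-boundedness \eqref{eq4.6} justify passage to the weak limit.

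The main obstacle is the quadratic estimate for the perturbed Dirac-type operator $DB$ (the Kato-type bound), which is the deep ingredient and the reason the smallness hypothesis \eqref{eq1.small} is imposed; everything downstream is bookkeeping on top of the functional calculus, the sole other non-formal point being the classical invertibility of $S_0^{A_0}$ (and of $\pm\tfrac12 I+K$ for $A_0$) together with the uniform a priori bounds that feed Sneiberg's lemma.
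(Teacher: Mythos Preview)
This theorem is not proved in the paper at all: it is stated with attribution to \cite{AAAHK} and then used as a black box in the proof of Theorem~\ref{t4.3}. There is consequently no ``paper's own proof'' against which to compare your proposal.

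That said, a brief comment on your sketch as a route to the result. The first-order/Dirac reformulation with the operator $DB$, bounded holomorphic functional calculus, and spectral projections is a coherent and now-standard framework for obtaining layer-potential bounds and jump relations for $t$-independent equations, and it does deliver \eqref{eq4.6}, the representation \eqref{eq4.rep}, and \eqref{eq4.7} once the quadratic estimate is in hand. Two small points, however. First, you identify the method of continuity with ``Sneiberg's lemma''; these are different tools (Sneiberg concerns stability of invertibility along an interpolation scale, not along an analytic family in $L^\infty$), and here it is the analytic dependence $A\mapsto S_0^A$ together with invertibility at $A_0$ that is used, i.e., the method of continuity as in \cite{AAAHK}. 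Second, your uniqueness argument for (R2) is incomplete: from $f=0$ you pass to $v=\partial_t u$ and invoke Lemma~\ref{l2.12} to get that $v(\cdot,t)$ converges in $L^2$, but nothing you have said forces that limit to be $0$; one needs an additional step (e.g., identifying the trace of $\partial_t u$ via the jump relation for the single layer, or arguing directly at the level of $u$ using $\widetilde{N}_*(\nabla u)\in L^2$ and $u\to 0$ $n.t.$) before concluding $v\equiv 0$ and hence $u\equiv \text{const}=0$.
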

 \begin{proof}[Proof of Theorem \ref{t4.3}]
 The deep results underlying Theorem \ref{t4.3} are 
Theorems \ref{t2.6} and  \ref{t4.5}, and we shall deduce
the first as a straightforward corollary of the latter two.
We observe that if $u$ solves (R2) with data $f \in L^2_1, $ then 
$$ \lim_{t\to 0} \partial_t u (\cdot,t) = \lim_{t\to 0} \partial_t\,\mathcal{P}(t) f \equiv \mathcal{A} f. $$
Thus, by \eqref{eq4.6}, \eqref{eq4.rep} and the bijectivity of $S_0$,
$$\|\mathcal{A}f\|_2 \leq C \|\nabla_xf\|_2.$$
  
 The proof of the opposite inequality is only a bit harder, and we sketch the details briefly here.
 We modify slightly the strategy of Verchota \cite{V}.
 By the well known Rellich identity (see, e.g., \cite{K}), and the case $\epsilon = 0$ of 
 Theorem \ref{t4.5}, for $A_0$ real and symmetric
 we have that
 \begin{equation}\label{eq4.9}\|\nabla_x u_0(\cdot,t)\|_2 \approx 
 \|\partial_t u_0 (\cdot,t)\|_2,\end{equation}
 uniformly in $t\geq 0$,
 when $u_0(\cdot,t) \equiv S^0_t f,$ and $S^0_t$ is the single layer potential
 asociated to $L_0 \equiv -\dv A_0 \nabla.$   By Theorem \ref{t4.5} and analytic perturbation theory,
 \begin{equation}\label{eq4.lip}\|\left(\nabla S_t^0 - \nabla S_t\right) f\|\leq C \|A_0-A\|_\infty \,
 \|f\|_2.\end{equation}  The latter estimate, combined with
\eqref{eq4.9} yields, uniformly in $t\geq 0$,
 \begin{equation*}C^{-1}\|\partial_t S_t f\|_2-C\epsilon_0\|f\|_2\leq
 \|\nabla_x S_t f\|_2 \leq C\|\partial_t S_t f\|_2 + C\epsilon_0\|f\|_2.\end{equation*}
Since the tangential derivatives $\nabla_x S_t f$ do not jump across the boundary,
the latter bound, plus its analogue for the lower half space, and \eqref{eq4.7} imply
 \begin{equation*}\left\|\frac{1}{2}(A_{n+1,n+1})^{-1}f+\mathcal{T}_{n+1}f \right\|_2\leq
 \left\|\frac{1}{2}(A_{n+1,n+1})^{-1}f-\mathcal{T}_{n+1}f \right\|_2 + C\epsilon_0\|f\|_2,\end{equation*}
where $\mathcal{T}_{n+1} \equiv \mathcal{T}\cdot e_{n+1}$.
Thus , by the accretivity of $A_{n+1,n+1}$ we have
\begin{multline*}\|f\|_2 \leq \left\|\frac{1}{2}(A_{n+1,n+1})^{-1}f+\mathcal{T}_{n+1}f \right\|_2 +
 \left\|\frac{1}{2}(A_{n+1,n+1})^{-1}f-\mathcal{T}_{n+1}f \right\|_2\\
 \leq C \left\|\frac{1}{2}(A_{n+1,n+1})^{-1}f-\mathcal{T}_{n+1}f \right\|_2 + C\epsilon_0\|f\|_2.
 \end{multline*}
 For $\epsilon_0$ small enough, we may first hide the small term, and then obtain invertibility 
 on $L^2$ of
 $-\frac{1}{2}(A_{n+1,n+1})^{-1} I +\mathcal{T}_{n+1}$ using  \eqref{eq4.lip} and
 the method of continuity as in \cite{AAAHK}.  Now, given $f \in D(\mathcal{A})$, we set
 $$\tilde{u}(\cdot,t) \equiv \mathcal{P}(t) f,\quad u(\cdot,t)\equiv S_t \left(-\frac{1}{2}(A_{n+1,n+1})^{-1}I +\mathcal{T}\right)^{-1} \mathcal{A}f,$$
 so that $\partial_t \tilde{u}(\cdot,t),\,\partial_t u(\cdot,t) \to \mathcal{A}f \, n.t.$ and in $L^2$ as $t \to 0$.
By uniqueness in (D2), $\partial_t \tilde{u} =\partial_t u$, hence
 $\tilde{u} - u$ depends only on $x$, and therefore, since $L(\tilde{u}-u) = 0$, and $\tilde{u}(\cdot,t)
 -u(\cdot,t)\in \dot{L}^2_1$, for each fixed $t>0$ (for $\tilde{u}$, this is a 
 consequence of the representation \eqref{eq4.represent}),  
 we deduce that $\tilde{u}-u = constant$.  Thus,
we have that $$\|\nabla_x f\|_2 \leq \sup_{t\geq 0}\|\nabla_x \tilde{u}(\cdot,t)\|_2
=\sup_{t\geq 0}\|\nabla_x u(\cdot,t)\|_2 \leq C\|\mathcal{A}f\|_2,$$
where in the last step we have used \eqref{eq4.6} and the 
bijectivity of  $-\frac{1}{2}(A_{n+1,n+1})^{-1} I +\mathcal{T}_{n+1}$.
  \end{proof}

\medskip
\noindent {\bf Acknowledgements}. The author thanks Zongwei Shen for posing the question.

\end{document}